\renewcommand{\phi}{\varphi}
\newtheorem{theorem}{Theorem}[section]
\newtheorem{lemma}[theorem]{Lemma}
\newtheorem{corollary}[theorem]{Corollary}
\newtheorem{defi}[theorem]{Definition}
\newenvironment{emdef}{\begin{defi} \rm}{ \end{defi}}
\newtheorem{exa}[theorem]{Example}
\newenvironment{example}{\begin{exa} \rm}{ \end{exa}}
\newenvironment{remark}{\begin{rem} \rm}{ \end{rem}}
\newtheorem{rem}[theorem]{Remark}
\DeclareMathOperator{\Id}{Id}
\DeclareMathOperator{\Inf}{\mathbf{Inf}}
\DeclareMathOperator{\Tr}{Tr}
\DeclareMathOperator{\NLow}{\mathbf{non-low}}
\DeclareMathOperator{\Hyperdark}{\mathbf{hdark}}
\DeclareMathOperator{\Hyperhyperdark}{\mathbf{hhdark}}
\DeclareMathOperator{\FinCl}{\mathbf{FinCl}}
\DeclareMathOperator{\InfCl}{\mathbf{InfCl}}
\newcommand{\rel}[1]{\mathrel{#1}}
\newcommand{\QA}[1]{\forall{#1}\,}
\newcommand{\QE}[1]{\exists{#1}\,}
\title[Complexity of word problems]{Classifying word problems of finitely
generated algebras via computable reducibility}
\author[V.~Delle Rose]{Valentino Delle Rose}
\address{National Center for Artificial Intelligence (CENIA), Chile}
\email{\href{mailto:valentino.dellerose@cenia.cl}{valentino.dellerose@cenia.cl}}
\author[L.~San Mauro]{Luca San Mauro}
\address{Institute of Discrete Mathematics and Geometry, Vienna University
of Technology, Austria}
\email{\href{mailto:luca.sanmauro@gmail.com}{luca.sanmauro@gmail.com}}
\author[A.~Sorbi]{Andrea Sorbi}
\address{Dipartimento di Ingegneria Informatica e Scienze Matematiche\\
Universit\`a Degli Studi di Siena\\
I-53100 Siena, Italy}
\email{\href{mailto:andrea.sorbi@unisi.it}{andrea.sorbi@unisi.it}}
\keywords{Word problems, finitely generated algebras, computably enumerable
structures,
computably enumerable equivalence relations, computable reducibility.}
\thanks{Delle Rose is funded by the National Center for Artificial
Intelligence CENIA FB210017, Basal ANID. San Mauro's research was
funded in whole or in part by the Austrian Science Fund (FWF) [P 36304-N].
Sorbi is a member of INDAM-GNSAGA, and was partially supported by PRIN
2017 Grant ``Mathematical Logic: models, sets,
computability''.}
\thanks{The authors wish to thank an anonymous referee for many precious
suggestions and comments, and for
having pointed out a series of papers
(in particular~\cite{Kasymov-Khusainov}) which had escaped their attention,
but are highly relevant to this paper.}
\subjclass[2010]{03D40, 03D25}
\begin{document}

\begin{abstract}

We contribute to a recent research program which aims at revisiting the
study of the complexity of word problems, a major area of research in
combinatorial algebra, through the lens of the theory of computably
enumerable equivalence relations (ceers), which has considerably grown  in
recent times. To pursue our analysis, we rely on the most popular way of
assessing the complexity of ceers, that is via computable reducibility on
equivalence relations, and its corresponding degree structure (the
$\mathrm{c}$-degrees). On the negative side, building on previous work of
Kasymov and Khoussainov, we individuate a collection of
$\mathrm{c}$-degrees of ceers which cannot be realized by the word problem
of any finitely generated algebra of finite type. On the positive side,  we
show that word problems of finitely generated semigroups realize a
collection of $\mathrm{c}$-degrees which embeds rich structures and is
large in several reasonable ways.
\end{abstract}

\maketitle

\section{Introduction}
In recent years, computably enumerable (or, simply, c.e.) equivalence
relations, often called \emph{ceers} after~\cite{Gao-Gerdes}, have been
widely studied. One of the reasons motivating this interest lies in the fact
that ceers arise naturally in combinatorial algebra as word problems of
familiar c.e.~algebraic structures like groups, semigroups, rings, and so on.
By a \emph{c.e.~structure} $A$, we will mean in this paper a nontrivial
algebraic-relational structure for which there exists a
\emph{c.e.~presentation}, i.e.\ a structure $A_{\omega}$ of the same type as
$A$ but having universe $\omega$, possessing uniformly computable operations,
uniformly c.e.\ relations, and a ceer $=_{A}$ which is a congruence on
$A_{\omega}$ such that $A \simeq {A_\omega}_{/=_A}$, i.e.\ $A$ is isomorphic
with the quotient structure obtained by dividing $A_{\omega}$ by $= _{A}$.
The ceer $=_{A}$ is called in this case the \emph{word problem} of $A$ (or,
rather, of its given c.e.\ presentation). Selivanov's survey
paper~\cite{Selivanov} (c.e.\ structures are therein called \emph{positive
structures}) and Khoussainov's survey paper~\cite{Bakh} are excellent
introductions to c.e.~structures.

Word problems appeared in mathematics in 1911, when
Dehn~\cite{dehn1911unendliche} introduced the word problem for finitely
presented  groups, with the goal of addressing the topological issue of
deciding whether two knots are equivalent. Nowadays, much is known about the
complexity of word problems for various algebraic structures. Most notably,
the Novikov-Boone theorem~\cite{novikov1955,boone1959word}---one of the most
spectacular applications of computability theory to general
mathematics---states that the word problem for finitely presented\ groups is
undecidable.

Yet, a basic obstacle towards a full understanding of word problems is that
the computability theoretic machinery commonly employed to measure their
complexity  (e.g., Turing reducibility) is defined for sets, while it is
generally acknowledged that many computational facets of word problems emerge
only if one interprets them as equivalence relations. For example, if $G$ is
a c.e.~group, then it is immediate to see that the equivalence classes of
$=_G$ are uniformly computably isomorphic with each other. It follows that in
a group, the individual word problems (i.e., to decide equality to a word
$w$, as $w$ changes) have all the same complexity. This is not the case for
semigroups: Shepherdson~\cite{shepherdson1965machine} proved that from any
uniformly c.e.~sequence $\{A_i: i \in \omega\}$ of sets, one can construct a
finitely presented\ semigroup $S$ such that the collection of the Turing
degrees of the individual word problems of $S$ (i.e.\ the equivalence classes
of $=_S$) contains all the Turing degrees of the various sets $A_i$.

Hence, to enrich the study of word problems, one shall lift the underlying
computability theoretic analysis from sets to equivalence relations. In this
direction, it is important to mention that starting from
\cite{Gavryushkin-Khoussainov-Stephan}, Khoussainov and other authors have
conducted a systematic investigation of which c.e.\ structures have a word
problem \emph{coinciding} with a fixed ceer: see also
\cite{Fokina-Khoussainov-Semukhin, Gavruskin-Jain-Khoussainov-Stephan}.
Particularly important to this line of
research is an early result of Kasymov and
Khoussainov~\cite{Kasymov-Khusainov} implying that no ceer whose principal
transversal (see Definition~\ref{def:principal-transversal}) is hyperimmune
can be the word problem of any finitely generated algebra of finite type.
Other relevant papers, which investigate how the computability theoretic
properties of a ceer affect the algebraic properties of the structures having
that ceer as word problem, are \cite{Kasymov, Khoussainov-Miasnikov}.

In this paper, we take a slightly different approach, namely we are
interested in investigating which ceers can be \emph{identified} in a broader
sense with word problems of which c.e.~structures, where, rather than mere
coincidence, being identified means in this case to lie in the same
reducibility degree with respect to some reducibility on equivalence
relations which is suitable to measure their relative complexity. Pioneering
attempts at this approach can be found for instance in
\cite{Nies-Sorbi,DelleRose-S-S-1}. The most popular reducibility in this
sense is the one given by next definition.

\begin{emdef}\label{def:reducibility}
Given a pair of equivalence relations $R, S$ on $\omega$, we say that $R$ is
\emph{computably reducible} (or, simply, \emph{$\mathrm{c}$-reducible}) to
$S$ (denoted by $R \le_{\mathrm{c}} S$) if there exists a computable function
$f$ such that
\[
(\forall x,y)[x \rel{R} y \Leftrightarrow f(x) \rel{S} f(y)].
\]
\end{emdef}

By means of this reducibility, we identify two equivalence relations $R, S$
if $R\leq_{\mathrm{c}} S$ and $S \leq_{\mathrm{c}} R$ (denoted by
$R\equiv_{\mathrm{c}} S$). The \emph{$\mathrm{c}$-degree} of $R$ is the
equivalence class of $R$ under the equivalence relation
$\equiv_{\mathrm{c}}$. Given a c.e.\ structure $A$, let us say that a ceer
$R$ is \emph{$\mathrm{c}$-realized} by $A$, if $R$ and $=_A$ have the same
$\mathrm{c}$-degree; let us also say that a $\mathrm{c}$-degree of ceers is
\emph{$\mathrm{c}$-realized} by $A$ if some ceer in the $\mathrm{c}$-degree
is $\mathrm{c}$-realized by $A$ (clearly, this is equivalent to saying that
all the ceers in the $\mathrm{c}$-degree are $\mathrm{c}$-realized by $A$).
Special attention should be given to the problem of finding families of
structures which are \emph{$\mathrm{c}$-complete for the ceers}, namely
families of structures such that every ceer $R$ is $\mathrm{c}$-realized by
some structure lying in the family.

As aforementioned, it is not difficult to see that the groups are not
$\mathrm{c}$-complete for the ceers (observed in \cite{Gao-Gerdes}; see
\cite[Fact~3.6]{DelleRose-S-S-1} for a proof). On the other hand, it has been
shown in \cite{DelleRose-S-S-1} that the semigroups are $\mathrm{c}$-complete
for the ceers. In fact, for every ceer $R$ there exists a c.e.~semigroup $S$
such that the two ceers $R$ and $=_S $ are $\mathrm{c}$-equivalent, in fact
they are isomorphic in the category of equivalence relations: recall that two
equivalence relations $U,V$ on $\omega$ are called \emph{isomorphic} if there
is a reduction $f$ of $U$ to $V$ such that the range of $f$ intersects all
$V$-equivalence classes. This implies that the reduction is invertible, i.e.\
there is a reduction $g$ of $V$ to $U$ such that $f$ ad $g$ invert each other
on the equivalence classes, namely $x\rel{U} g(f(x))$ and $x \rel{V}
f(g(x))$, for every number $x$: this isomorphism relation on equivalence
relations has been considered in several papers, including
\cite{Gavruskin-Jain-Khoussainov-Stephan,Gavryushkin-Khoussainov-Stephan,
joinmeet}; for a justificaton of the name ``isomorphism'' given to it, see
\cite{DelleRose-SanMauro-Sorbi:categories}. However, answering a question
raised by Gao and Gerdes in \cite{Gao-Gerdes}, one can build a ceer
(see~\cite{DelleRose-S-S-1}) which is not $\mathrm{c}$-realized by any
finitely generated semigroup, thus showing that the finitely generated
semigroups are not $\mathrm{c}$-complete for the ceers.

In Section~\ref{sct:negative} (where, on the negative side, we are interested
in describing ceers which cannot be $\mathrm{c}$-realized by finitely
generated algebras of finite type) we show (Theorem~\ref{thm:main-first})
that if $R$ is a ``hyperdark'' ceer (by Definition~\ref{def:various-darkness}
this means that $R$ has infinitely many equivalence classes and all of its
infinite transversals are hyperimmune),
then $R$ cannot be
$\mathrm{c}$-realized by any finitely generated~algebra of finite type. Thus,
Theorem~\ref{thm:main-first} generalizes to $\mathrm{c}$-realizability the
above mentioned result of Kasymov and Khoussainov~\cite{Kasymov-Khusainov}
which, in the terminology of Definition~\ref{def:various-darkness}, states
that no hyperdark ceer can coincide with the word problem of any finitely
generated~algebra of finite type: indeed our proof is a straightforward
sharpening of \cite{Kasymov-Khusainov}, based on the observation (see
Lemma~\ref{lem:main-first}) that, for a ceer, the property that every
infinite transversal is hyperimmune is invariant under
$\mathrm{c}$-equivalence. In Section~\ref{sct:finite-classes} we give
examples of hyperdark ceers: in particular, in Theorem~\ref{thm:all-high} we
exhibit an example of a hyperdark ceer whose equivalence classes are all
finite. On the other hand (Theorem~\ref{thm:finite-nohhd}), we show that
ceers having only finite classes are bound to have an infinite transversal
which is not hyperhyperimmune (hence, they cannot be ``hyperhyperdark'', see
again Definition~\ref{def:various-darkness} below).

On the positive side, in Section~\ref{sct:positive} we investigate the
collection $\mathbf{S}_{f.g.}$ of the $\mathrm{c}$-degrees of ceers which are
$\mathrm{c}$-realized by finitely generated semigroups. In this regard,
Theorem~\ref{thm:main-first} is optimal, since if we drop from hyperimmunity
to immunity then $\mathrm{c}$-unrealizability gets lost, as already shown by
known results in the literature, including the remarkable theorem of
Myasnikov~and~Osin~\cite{Myasnikov-Osin}, proving that in fact there exists
an infinite finitely generated c.e.\ group with a word problem in which all
infinite transversals are immune. Another useful example (which we sketch in
some detail in Example~\ref{example:Hirschfeldt-Khoussainov}) of an infinite
two-generator c.e.~semigroup with a word problem in which all infinite
transversals are immune had been exhibited by Hirschfeldt and
Khoussainov~\cite{Hirschfeldt-Khoussainov}. The rest of
Section~\ref{sct:positive} investigates the subclass of $\mathbf{S}_{f.g.}$
consisting  of the $\mathrm{c}$-degreees of ceers possessing infinite
transversals which are not immune. We prove that this subclass of
$\mathbf{S}_{f.g.}$ is large in several reasonable ways: for instance, it
contains an initial segment of the $\mathrm{c}$-degrees of ceers with
infinitely many equivalence classes, which is order-isomorphic with the tree
$\omega^{<\omega}$ of finite strings of natural numbers, partially ordered by
the prefix relation on strings.

\subsection{Notations and background}\label{ssct:notations}

Our main reference for computability theory is Soare's
textbook~\cite{Soare-Book-new}, to which the reader is referred for all
unexplained notions. Throughout the paper when we talk about ``degrees''
without any further specification, we will mean ``Turing degrees''.

Let $\mathbf{Ceers}$ denote the collection of all ceers. A ceer is
\emph{infinite} if it has infinitely many equivalence classes, it is
\emph{finite} otherwise. Let the symbols $\Inf$, $\mathbf{Fin}$, $\FinCl$ and
$\InfCl$ denote, respectively, the collection of infinite ceers, the
collection of finite ceers, the collection of ceers having only finite
equivalence classes, and the collection of ceers having only infinite
equivalence classes.

\begin{emdef}\label{def:infiniteclasses}
The \emph{cylindrification} of a ceer $R$ is the ceer $R_\infty$ given by
\[
\left\langle i, x \right\rangle \rel{R_{\infty}}
\left\langle j, y \right\rangle \Leftrightarrow i \rel{R} j,
\]
where $\langle \cdot, \cdot \rangle$ denotes the Cantor pairing function.
\end{emdef}
Clearly $R_\infty \in \InfCl$, and $R\equiv_{\mathrm{c}} R_\infty$.

A ceer $R$ is \emph{dark} if $R \in \Inf$ and $\Id_\omega \nleq_{\mathrm{c}}
R$, where $\Id_\omega$ denotes the equality relation on $\omega$. Let the
symbol $\mathbf{dark}$ denote the collection of dark ceers. Dark ceers have
been extensively studied in \cite{joinmeet}. Being dark for a ceer can be
conveniently described using the notion of a transversal for an equivalence
relation.

\begin{emdef}\label{def:transversal}
If $R$ is an equivalence relation on $\omega$, we say that a set $T\subseteq
\omega$ is a \emph{transversal} of $R$ if $x\, \cancel{\rel{R}}\, y$, for every
pair of distinct elements $x,y \in T$.
\end{emdef}

It is now easy to check that a ceer $R\in \Inf$ is dark if and only if it
admits no infinite c.e.~transversal, or equivalently every infinite
transversal of $R$ is \emph{immune}, i.e.\ it does not contain any infinite
c.e.\ set.\footnote{We point out that, in the context of c.e.~structures (for
example in \cite{Hirschfeldt-Khoussainov} and \cite{Khoussainov-Miasnikov}),
the terminology \emph{algorithmically finite} algebra is used to denote a
c.e.~algebra whose word problem is dark.}

Other stronger immunity notions have been widely considered in classical
computability theory, and we briefly recall their definitions. An
\emph{array} of sets of natural numbers is a sequence $(X_{n})_{n \in
\omega}$ of sets of natural numbers. We say that a set $X \subseteq \omega$
\emph{is intersected} by a disjoint array $(X_{n})_{n \in \omega}$
(\emph{disjoint} means that $X_{n}\cap X_{m} =\emptyset$ if $n \ne m$) if,
for all $n$, $X_n \cap X \ne \emptyset$. An infinite set is
\emph{hyperimmune} if it is not intersected by any \emph{strong disjoint
array}, i.e.\ a disjoint array $(F_{n})_{n \in \omega}$ of finite sets,
presented by their canonical indices: hence $F_{n}=D_{f(n)}$ for some
computable function $f$. Similarly, an infinite set is
\emph{hyperhyperimmune} if it is not intersected by any \emph{weak disjoint
array}, i.e.\ a disjoint array $(F_{n})_{n \in \omega}$ of finite sets,
presented by their c.e.~indices: hence $F_{n}=W_{f(n)}$ for some computable
function $f$.

In analogy with the definition of a dark ceer, these stronger immunity
notions suggest accordingly the following definition.

\begin{emdef}\label{def:various-darkness}
A ceer $R$ is \emph{hyperdark} (respectively, \emph{hyperhyperdark}) if $R\in
\Inf$ and all of its infinite transversals are hyperimmune (respectively,
hyperhyperimmune).
\end{emdef}

Let us use the notations $\Hyperdark$ and $\Hyperhyperdark$ to denote,
respectively, the collections of hyperdark ceers and hyperhyperdark ceers.
Clearly $\Hyperhyperdark \subseteq \Hyperdark \subseteq \mathbf{dark}$, as
hyperhyperimmunity implies hyperimmunity, which in turn implies immunity.
Counterexamples witnessing proper inclusions among these classes of ceers can
be found by taking suitable \emph{unidimensional} ceers $R_{X}$, i.e.\ ceers
of the form $x \mathrel{R_X} y$ if and only if $x,y \in X$ or $x=y$, where
$X$ is a given c.e.\ set, and recalling some well know facts of classical
computability theory, which allow to draw the following conclusions: if $X$
is simple but not hypersimple then $R_X \in \mathbf{dark} \smallsetminus
\Hyperdark$, and if $X$ is hypersimple but not hyperhypersimple then $R_X \in
\Hyperdark \smallsetminus \Hyperhyperdark$. Obviously, $R_X \in
\Hyperhyperdark$ if $X$ is hyperhypersimple, hence all these classes are
nonempty.

\begin{remark}
In order to distinguish between ceers and their $\mathrm{c}$-degrees, given a
class $\mathbf{P}$ of ceers we shall adopt the convention of denoting by
$\mathbf{P}_{\mathrm{c}}$ the collection of $\mathrm{c}$-degrees of the
members of $\mathbf{P}$.
\end{remark}

\subsection{On the transversals of a ceer}

We conclude this section with some easy but useful observations about the
transversals of ceers, in particular of hyperdark ceers. If $R$ is an
equivalence relation on $\omega$, let us denote
\[
\Tr(R):=\{T \in 2^{\omega}: \textrm{$T$ is a transversal of $R$}\}.
\]
The following definition points out an important element of $\Tr(R)$.
\begin{emdef}\label{def:principal-transversal}
Given an equivalence relation $R$, its \emph{principal transversal} $T_R$ is
the set comprised of the least elements of all $R$-equivalence classes.
\end{emdef}
It is immediate to see that if $R$ is a ceer then its principal transversal
is co-c.e..

The relevance of the principal transversal in the investigation of
hyperdarkness is highlighted by the following observations, where given any
infinite set $A$ of numbers, we denote by $p_A$ the \emph{principal function
of $A$}, i.e.\ the function which enumerates $A$ in order of magnitude.

\begin{lemma}[Folklore]\label{lem:principal-all}
If $R$ is an equivalence relation on $\omega$ with infinitely many
equivalence classes, then for every infinite transversal $T$ of $R$, the
principal function $p_T$ of $T$ \emph{majorizes} the principal function
$p_{T_R}$ of the principal transversal, i.e. $p_T(i) \ge p_{T_R}(i)$ for
every $i \in \omega$.
\end{lemma}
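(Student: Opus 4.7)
The plan is to give a direct counting argument: show that below (or equal to) $p_T(i)$ the principal transversal $T_R$ already contains at least $i+1$ elements, which immediately forces $p_{T_R}(i) \le p_T(i)$.

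First I would fix $i \in \omega$ and list the first $i+1$ elements of $T$ in increasing order as $p_T(0) < p_T(1) < \cdots < p_T(i)$. Because $T$ is a transversal, these $i+1$ elements lie in $i+1$ pairwise distinct $R$-classes; call them $C_0, \ldots, C_i$. For each $j \le i$, let $m_j$ denote the least element of $C_j$. By definition of the principal transversal, $m_j \in T_R$ for every $j$, and since the $C_j$'s are distinct the $m_j$'s are also pairwise distinct. Moreover $m_j \le p_T(j) \le p_T(i)$, because $p_T(j) \in C_j$ and $m_j$ is the minimum of $C_j$.

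Thus $\{m_0, \ldots, m_i\}$ is a set of $i+1$ distinct elements of $T_R$ all bounded by $p_T(i)$. Since $p_{T_R}$ enumerates $T_R$ in increasing order, the $(i+1)$-th smallest element of $T_R$, namely $p_{T_R}(i)$, must satisfy $p_{T_R}(i) \le p_T(i)$. As $i$ was arbitrary, this yields the desired majorization.

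There is no real obstacle: the statement is essentially a pigeonhole/counting observation, and the only subtlety is being careful that the $m_j$'s are distinct (which follows from the $C_j$'s being distinct, which in turn follows from $T$ being a transversal). The hypothesis that $R$ has infinitely many equivalence classes is used only to guarantee that $T_R$ is actually infinite so that $p_{T_R}(i)$ is defined for every $i$.
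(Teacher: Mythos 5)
Your proof is correct. It takes a slightly different route from the paper's: you give a direct pigeonhole count, observing that the minima of the $i+1$ distinct $R$-classes met by $p_T(0),\dots,p_T(i)$ are $i+1$ distinct elements of $T_R$, each bounded by $p_T(i)$, which immediately forces $p_{T_R}(i)\le p_T(i)$. The paper instead argues by a two-case analysis: either $\{p_T(0),\dots,p_T(i-1)\}$ meets every class $[p_{T_R}(k)]_R$ for $k<i$, in which case $p_T(i)$ lies outside $\bigcup_{k<i}[p_{T_R}(k)]_R$ and hence is at least $p_{T_R}(i)$ (using that $p_{T_R}(i)$ is the least element outside that union), or some earlier $p_T(j)$ already lies outside that union and one bounds $p_{T_R}(i)$ by $p_T(j)<p_T(i)$. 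Your version avoids the case split and the auxiliary fact that $p_{T_R}(i)=\min\bigl(\omega\smallsetminus\bigcup_{k<i}[p_{T_R}(k)]_R\bigr)$, replacing it with the more transparent observation that the class minimum of any element $x$ is an element of $T_R$ bounded by $x$; the two arguments are of comparable length and both entirely elementary. Your closing remark about where the hypothesis of infinitely many classes is used (only to make $p_{T_R}$ total) is also accurate, and indeed that hypothesis is automatic once an infinite transversal $T$ exists.
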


\begin{proof}
Let $T$ be an infinite transversal of $R$ and for simplicity write
$p_{T_R}(i):=m_i$ and $p_{T}(i):=n_i$. Now, either for every $j<i$ there
exists $k<i$ such that $n_j \mathrel{R} m_k$, but then $m_i \leq n_i$ since
$n_i \notin \bigcup_{k<i} [m_k]_R$; or there exists $j<i$ such that  $n_j
\mathrel{\cancel{R}} m_k$ for every $k<i$, but then $n_j \notin \bigcup_{k<i}
[m_k]_R$ hence $m_i \leq n_j< n_i$.
\end{proof}

\begin{corollary}\label{cor:principal-all}
If $R \in \Inf$ then $R\in \Hyperdark$ if and only if $T_R$ is hyperimmune.
\end{corollary}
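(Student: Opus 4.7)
The plan is to prove the two directions separately, with the forward direction being immediate from the definitions and the backward direction using Lemma~\ref{lem:principal-all} together with the classical characterization of hyperimmunity via principal functions.

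For the forward direction, assume $R \in \Hyperdark$. Since $R \in \Inf$, its principal transversal $T_R$ is itself infinite, and by Definition~\ref{def:principal-transversal} it is a transversal of $R$. Hence $T_R$ is an infinite transversal of a hyperdark ceer and so, by Definition~\ref{def:various-darkness}, it is hyperimmune.

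For the backward direction, assume $T_R$ is hyperimmune and let $T$ be an arbitrary infinite transversal of $R$. By Lemma~\ref{lem:principal-all}, $p_T(i) \ge p_{T_R}(i)$ for every $i \in \omega$. I will now invoke the standard fact (see Soare's textbook) that an infinite set $A$ is hyperimmune if and only if its principal function $p_A$ is not majorized by any total computable function. Suppose for contradiction that $T$ is not hyperimmune; then there is a computable function $g$ with $g(i) \ge p_T(i)$ for all $i$. Combining with the majorization given by Lemma~\ref{lem:principal-all}, we get $g(i) \ge p_T(i) \ge p_{T_R}(i)$ for all $i$, so $g$ majorizes $p_{T_R}$, contradicting the hyperimmunity of $T_R$. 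Hence every infinite transversal of $R$ is hyperimmune, which by Definition~\ref{def:various-darkness} (together with the standing hypothesis $R \in \Inf$) yields $R \in \Hyperdark$.

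The argument is essentially a two-line consequence of Lemma~\ref{lem:principal-all}, so there is no real obstacle; the only subtlety is recalling the equivalent characterization of hyperimmunity in terms of nonmajorization of the principal function, which converts the pointwise inequality $p_T \ge p_{T_R}$ into the implication ``$T_R$ hyperimmune $\Rightarrow$ $T$ hyperimmune''.
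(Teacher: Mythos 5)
Your proposal is correct and follows essentially the same route as the paper: the backward direction combines Lemma~\ref{lem:principal-all} with the Kuznecov--Medvedev--Uspenskii characterization of hyperimmunity via non-majorization of the principal function, exactly as the paper does, while the forward direction (which the paper leaves implicit) is the trivial observation that $T_R$ is itself an infinite transversal. No issues.
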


\begin{proof}
A well-known theorem by Kuznecov, Medvedev, and Uspenskii
(see~\cite[Theorem~5.3.3]{Soare-Book-new}) states that an infinite set $A$ is
not hyperimmune if and only if there exists a computable function majorizing
the principal function of $A$. Therefore if $T_R$ is hyperimmune then so is
any infinite transversal of $R$.
\end{proof}

\section{Ceers not $\mathrm{c}$-realized by finitely generated algebras of
finite
type}\label{sct:negative}

Theorem~\ref{thm:main-first}, the main result of this section, is essentially
a consequence of Theorem~\ref{thm:Kasymov-Bakh} in \cite{Kasymov-Khusainov}
(see also
\cite{Gavruskin-Jain-Khoussainov-Stephan,Gavryushkin-Khoussainov-Stephan}),
with the addition of our Lemma~\ref{lem:main-first}.

\begin{theorem}\label{thm:Kasymov-Bakh}\cite{Kasymov-Khusainov}
If $A=\left( A, F \right)$ is an infinite c.e.\ algebra of finite type (i.e.,
$F$ is a finite set of operations) and the word problem $=_A$ is hyperdark,
then every finitely generated subalgebra of $A$ is finite.
\end{theorem}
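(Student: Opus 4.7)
The plan is to argue by contradiction. Write $R$ for the word problem $=_A$; assume $B$ is an infinite finitely generated subalgebra of $A$. I shall exhibit an infinite transversal of $R$ that is not hyperimmune, directly contradicting Definition~\ref{def:various-darkness}. Let $n_1, \dots, n_k \in \omega$ be generators of $B$ in the c.e.\ presentation $A_\omega$, and write $[B]_R := \{x \in \omega : x \rel{R} b \text{ for some } b \in B\}$ for the $R$-saturation of the carrier of $B$. The transversal I will work with is $T' := T_R \cap [B]_R$, the set of least elements of those $R$-classes that meet $B$; since $B$ has infinitely many $R$-classes, $T'$ is an infinite subset of $T_R$, in particular an infinite transversal of $R$. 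By the Kuznecov--Medvedev--Uspenskii theorem already cited in the proof of Corollary~\ref{cor:principal-all}, it will suffice to dominate the principal function $p_{T'}$ by a computable function.

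The key construction is an effective sequence of approximations to $B$. Set $B^{(0)} := \{n_1, \dots, n_k\}$ and $B^{(\ell+1)} := B^{(\ell)} \cup \{f(b_1, \dots, b_{a_f}) : f \in F,\ b_1, \dots, b_{a_f} \in B^{(\ell)}\}$, where $a_f$ denotes the arity of $f$. Because $F$ is finite and the operations of $A_\omega$ are uniformly computable, each $B^{(\ell)}$ is a finite subset of $\omega$ whose canonical index is computable uniformly in $\ell$, and $\bigcup_\ell B^{(\ell)}$ is the carrier of $B$. The crucial step is the following stabilization property: if the set of $R$-classes meeting $B^{(\ell+1)}$ coincides with the set of $R$-classes meeting $B^{(\ell)}$ for some $\ell$, then, since $R$ is a congruence on $A_\omega$, an easy induction on $m \geq \ell$ (using that every generator of $B^{(m+1)}$ is obtained by applying some $f \in F$ to elements whose $R$-classes already lie in $B^{(\ell)}$) shows that the set of $R$-classes meeting $B^{(m)}$ is again the same, forcing $B$ to be finite. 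Contrapositively, since $B$ is infinite the number of $R$-classes meeting $B^{(\ell)}$ is strictly increasing in $\ell$, and is therefore at least $\ell + 1$ for every $\ell$ (assuming $B^{(0)} \neq \emptyset$, which we may without loss of generality).

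Bounding $p_{T'}$ is then immediate. For each of the $\ell + 1$ distinct $R$-classes meeting $B^{(\ell)}$, the least element of that class---which is an element of $T'$---is bounded above by $\max B^{(\ell)}$. Hence $|T' \cap [0, \max B^{(\ell)}]| \geq \ell + 1$, so $p_{T'}(\ell) \leq \max B^{(\ell)}$. The right-hand side is a computable function of $\ell$, so $p_{T'}$ is majorized by a computable function; thus $T'$ is an infinite transversal of $R$ that is not hyperimmune, contradicting the hyperdarkness of $R$.

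The main obstacle I anticipate is the strict-growth step for the number of $R$-classes meeting $B^{(\ell)}$, which is where the finite-type hypothesis is essential: if $F$ were infinite, or if the operations of $F$ had unbounded arity, then the approximations $B^{(\ell)}$ would not form a uniformly computable sequence of finite sets, and the congruence induction yielding stabilization would break down. Once the strict-growth step is in place, the remainder of the argument is simply the quantitative form of the Kuznecov--Medvedev--Uspenskii criterion already appearing in Corollary~\ref{cor:principal-all}.
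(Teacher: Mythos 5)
Your proof is correct and follows essentially the same route as the paper's: the iterated sets $B^{(\ell)}$ are exactly the paper's $X_i$, the strict-growth/stabilization observation (which the paper states without the congruence induction you supply) is the same key step, and both arguments conclude by majorizing the principal function of a transversal by the computable function $\ell \mapsto \max B^{(\ell)}$. The only cosmetic difference is that you use the transversal $T_R \cap [B]_R$ drawn from the principal transversal, whereas the paper picks a fresh witness $y_i \in X_{i+1}$ from each new class.
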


\begin{proof}
For later reference we sketch the proof, taken from
\cite{Gavryushkin-Khoussainov-Stephan}. Let $A$ be as in the statement of the
theorem, and for every $f\in F $ let $n_f$ denote the arity of $f$. Suppose
that $X\subseteq A$ is finite, with $X \ne \emptyset$, but the subalgebra
$A_X$ of $A$, generated by $X$, is infinite. Define the sequence $(X_i)_{i
\in \omega}$ of sets as follows. Let $X_0:=X$; having defined $X_i$ let
\[
X_{i+1}:=X_i\cup \{y: (\exists f \in F)(\exists \vec{x}\in X_i^{n_f})
[y=f(\vec{x})]\}.
\]
Clearly each $X_i$ is a finite set of which one can uniformly compute the
canonical index, and the union $\bigcup_{i\in \omega} X_i$ gives the universe
of $A_X$. Since $A_X$ is infinite, we have that for every $i$ there exists $y
\in X_{i+1}$ such that $y \mathrel{{\ne}_A} z$ for every $z \in X_i$. Thus we
can define a sequence $(y_i)_{i \in \omega}$ such that $y_i \in X_{i+1}$ and
$y_i \mathrel{\ne_A} y_j$ if $i\ne j$, yielding that the set $T=\{y_i: i \in
\omega\}$ is a transversal of $=_A$. The function $m(i):=\max(X_{i+1})$ is
obviously computable and $\max (\{y_i: i \leq n\}) \leq m(n)$, for every $n$.
On the other hand, it is clear that $p_T(n)\leq \max (\{y_i: i \leq n\})$,
for every $n$. Therefore, $=_A$ is not hyperdark, as its infinite transversal
$T$ is not hyperimmune.
\end{proof}

\begin{corollary}\label{corollary:main-cor}
If $A$ is an infinite finitely generated c.e.\ algebra of finite type then the
word problem $=_A$ of $A$ is not hyperdark.
\end{corollary}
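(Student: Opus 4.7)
The plan is to derive this corollary as an immediate contrapositive consequence of Theorem~\ref{thm:Kasymov-Bakh}. I would argue by contradiction: assume that $A$ is an infinite finitely generated c.e.\ algebra of finite type whose word problem $=_A$ is hyperdark. By the hypothesis that $A$ is finitely generated, there is a finite subset $X\subseteq A$ such that $A$ coincides with the subalgebra $A_X$ generated by $X$; in particular, $A$ sits inside itself as a finitely generated subalgebra.

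I would then apply Theorem~\ref{thm:Kasymov-Bakh} directly: since $A$ is an infinite c.e.\ algebra of finite type with hyperdark word problem, every finitely generated subalgebra of $A$ must be finite. Specialising this to the subalgebra $A_X$ gives that $A_X$ is finite. But $A_X=A$, so $A$ itself is finite, contradicting the standing assumption that $A$ is infinite.

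I do not anticipate any real obstacle here, since the corollary is essentially a rephrasing of Theorem~\ref{thm:Kasymov-Bakh} in the special case where the finitely generated subalgebra happens to be the whole algebra. The only subtlety worth flagging in the write-up is that the hypothesis ``$A$ is finitely generated'' places $A$ itself within the family of finitely generated subalgebras to which the theorem applies; no further use of the transversal construction or of hyperimmunity is needed beyond what has already been carried out in the proof of Theorem~\ref{thm:Kasymov-Bakh}.
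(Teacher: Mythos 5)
Your proposal is correct and matches the paper's intent: the paper's proof is simply ``Immediate,'' and the implicit argument is exactly the contrapositive you spell out, namely that $A$ is itself a finitely generated subalgebra of $A$, so Theorem~\ref{thm:Kasymov-Bakh} would force $A$ to be finite if $=_A$ were hyperdark.
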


\begin{proof}
Immediate.
\end{proof}

Before proving Theorem~\ref{thm:main-first}, we observe:

\begin{lemma}\label{lem:main-first}
If $R \in \Hyperdark$, $E\in \Inf$, and $E \leq_{\mathrm{c}} R$, then $E \in
\Hyperdark$.
\end{lemma}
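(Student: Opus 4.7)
The plan is to take an arbitrary infinite transversal $T$ of $E$ and show directly that $T$ is hyperimmune, by transferring hyperimmunity from a suitably associated transversal of $R$ through the reduction map.

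First I would fix a computable function $f$ witnessing $E \leq_{\mathrm{c}} R$ and make the key preliminary observation that $f$ sends transversals of $E$ to transversals of $R$: if $x,y \in T$ are distinct, then $x \mathrel{\cancel{E}} y$, whence $f(x) \mathrel{\cancel{R}} f(y)$; in particular $f(x) \neq f(y)$, so $f$ is injective on $T$ and $f(T)$ is an infinite transversal of $R$. Since $R \in \Hyperdark$, the set $f(T)$ is hyperimmune (this follows either directly from the definition of hyperdarkness or, more uniformly, from Corollary~\ref{cor:principal-all} combined with Lemma~\ref{lem:principal-all}).

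The core of the argument is now to pull hyperimmunity back from $f(T)$ to $T$ using the Kuznecov--Medvedev--Uspenskii characterization already recalled in the proof of Corollary~\ref{cor:principal-all}: an infinite set is hyperimmune if and only if its principal function is not majorized by any computable function. Suppose for contradiction that $T$ is not hyperimmune, so that some computable $h$ majorizes $p_T$. Define the computable function
\[
H(n):=\max\{f(k) : k \leq h(n)\}.
\]
Since $p_T(i) \leq h(n)$ for every $i \leq n$, the set $T \cap [0,h(n)]$ contains at least $n+1$ elements, and applying the injection $f|_T$ gives at least $n+1$ distinct elements of $f(T)$ lying below $H(n)$. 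Thus $p_{f(T)}(n) \leq H(n)$ for every $n$, contradicting the hyperimmunity of $f(T)$. Hence $T$ is hyperimmune, and since $T$ was arbitrary (and $E \in \Inf$), we conclude $E \in \Hyperdark$.

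I do not expect any serious obstacle: the only slightly delicate point is to remember that $f$ need not be globally injective, which is why one must work with its restriction to $T$; the counting argument above uses only injectivity on $T$, which follows automatically from $T$ being a transversal. Once this is noted, the majorization transfer is completely routine.
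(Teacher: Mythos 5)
Your proof is correct, and it reaches the conclusion by a genuinely different route from the paper's. The paper argues by contrapositive and first passes to the cylindrifications $E_\infty$ and $R_\infty$ in order to obtain $1$-$1$ reductions (invoking the fact that a reduction into a ceer all of whose classes are infinite can be taken injective); it then pushes a strong disjoint array intersecting a non-hyperimmune transversal of $E$ forward to one intersecting a transversal of $R_\infty$, and finally descends to the principal transversal $T_R$ via Lemma~\ref{lem:principal-all} and Corollary~\ref{cor:principal-all}. You bypass the cylindrification step entirely by the (correct and rather cleaner) observation that \emph{any} reduction $f$ is automatically injective on any transversal of $E$, so $f(T)$ is already an infinite transversal of $R$ itself; and you transfer hyperimmunity through the Kuznecov--Medvedev--Uspenskii majorization characterization rather than through disjoint arrays, noting that $H(n):=\max\{f(k): k\le h(n)\}$ majorizes $p_{f(T)}$ whenever $h$ majorizes $p_T$ (this step is fine as written, since $p_T$ is increasing, so $p_T(i)\le p_T(n)\le h(n)$ for $i\le n$, and no monotonicity assumption on $h$ is needed). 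What each approach buys: yours is shorter and more self-contained, avoiding both the cylindrification detour and the appeal to the principal transversal; the paper's version reuses machinery (Lemma~\ref{lem:principal-all}, Corollary~\ref{cor:principal-all}, and the $1$-$1$ normalization) that it has already set up and that it exploits elsewhere, and it stays closer to the array formulation of hyperimmunity used in the definitions. The two are of course equivalent via the KMU theorem, which the paper itself quotes in the proof of Corollary~\ref{cor:principal-all}.
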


\begin{proof}
Suppose that $R \in \Hyperdark$, $E \leq_{\mathrm{c}} R$, and $E \in \Inf
\smallsetminus \Hyperdark$. As $E\equiv_{\mathrm{c}} E_{\infty}$ and $R
\equiv_{\mathrm{c}} R_{\infty}$, we have that $E _\infty \leq_{\mathrm{c}}
R_\infty$. It is easy to see that if $U,V$ are ceers with $U\leq_{\mathrm{c}}
V$ and $V \in \InfCl$ then $U \leq_{\mathrm{c}} V$ via a $1$-$1$ computable
function, see for instance \cite[Remark~1.2]{Andrews-Badaev-Sorbi}. Thus,
suppose that $f_0, f_1$ are $1$-$1$ computabe functions reducing
$E\leq_{\mathrm{c}} E_\infty$ and $E_\infty \leq_{\mathrm{c}} R_\infty$,
respectively. Let $T$ be an infinite non-hyperimmune transversal of $E$.
Clearly the set $\widehat{T}:=(f_1\circ f_0)[T]$ (i.e.\ the image of $T$
under the composition $f_1\circ f_0$) is an infinite transversal of
$R_\infty$, and one easily sees  that $\widehat{T}$ is not hyperimmune,
since, by injectivity, $f_1\circ f_0$ maps any strong disjoint array
intersecting $T$ to a strong disjoint array intersecting $\widehat{T}$. By
Lemma~\ref{lem:principal-all}, it follows that the principal transversal
$T_{R_\infty}$ of $R_{\infty}$ is not hyperimmune, and thus the principal
function $p_{T_{R_\infty}}$ of this transversal is majorized by some
computable function $g$. On the other hand by definition of cylindrification,
for every $i$ we have that $p_{T_{R_\infty}}(i)=\langle n_i, 0\rangle$ for
some $n_i$, and the set $\{n_i: i \in \omega\}$ coincides with the principal
transversal $T_R$ of $R$, with principal function $p_{T_R}(i)=n_i$. It
immediately follows by Corollary~\ref{cor:principal-all} that $T_R$ is not
hyperimmune, as $p_{T_R}(i)=n_i\leq \langle n_i, 0\rangle=
p_{T_{R_\infty}}(i) \leq g(i)$.
\end{proof}

\begin{theorem}\label{thm:main-first} If $R\in \Hyperdark$ then $R$ is not
$\mathrm{c}$-realized by any finitely generated algebra of finite type.
\end{theorem}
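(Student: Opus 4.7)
The plan is to derive the theorem essentially by a short contradiction argument that assembles the two tools already set up in this section, namely Corollary~\ref{corollary:main-cor} (infinite finitely generated c.e.\ algebras of finite type cannot have hyperdark word problems) and Lemma~\ref{lem:main-first} (hyperdarkness transfers downwards along $\leq_{\mathrm{c}}$ among infinite ceers). Neither an independent combinatorial construction nor a new analysis of transversals should be needed.

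Concretely, I would argue as follows. Assume towards a contradiction that $R \in \Hyperdark$ is $\mathrm{c}$-realized by some finitely generated algebra $A$ of finite type, so that $R \equiv_{\mathrm{c}} =_A$. First I would observe that $A$ must be infinite. Indeed, $R \in \Hyperdark$ entails $R \in \Inf$, and any reduction $R \leq_{\mathrm{c}} =_A$ induces an injection from the $R$-classes to the $=_A$-classes; hence $=_A$ has infinitely many equivalence classes, i.e.\ the quotient $A \simeq (A_\omega)_{/=_A}$ is infinite.

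Next I would apply Corollary~\ref{corollary:main-cor} to the infinite finitely generated c.e.\ algebra $A$ of finite type to conclude that $=_A$ is not hyperdark. On the other hand, since $=_A \equiv_{\mathrm{c}} R$, in particular $=_A \leq_{\mathrm{c}} R$ and $=_A \in \Inf$, so Lemma~\ref{lem:main-first} applied with $E := =_A$ yields $=_A \in \Hyperdark$. This contradicts the previous sentence, completing the proof.

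Since both Corollary~\ref{corollary:main-cor} and Lemma~\ref{lem:main-first} have already been established, there is no genuine obstacle in the argument; the only point to be careful about is the preliminary bookkeeping ensuring that $=_A$ inherits from $R$ the property of having infinitely many classes (so that Lemma~\ref{lem:main-first}'s hypothesis $E \in \Inf$ is satisfied and Corollary~\ref{corollary:main-cor}'s hypothesis that $A$ is infinite is met). Everything else reduces to citing the two preceding results.
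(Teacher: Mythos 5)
Your proposal is correct and follows essentially the same route as the paper: combine Corollary~\ref{corollary:main-cor} with the $\equiv_{\mathrm{c}}$-invariance of hyperdarkness supplied by Lemma~\ref{lem:main-first}. The only difference is that you spell out the (easy but genuinely needed) bookkeeping that $=_A$ inherits infinitely many classes from $R$, so that $A$ is infinite and the hypothesis $E\in\Inf$ of the lemma is met — a point the paper leaves implicit.
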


\begin{proof}
The claim follows by Corollary~\ref{corollary:main-cor}, and the fact that,
by Lemma~\ref{lem:main-first}, membership in $\Hyperdark$ is
$\equiv_{\mathrm{c}}$-invariant, i.e.\ if $E,R$ are ceers with
$R\equiv_{\mathrm{c}} E$ then $E \in \Hyperdark$ if and only if $R \in
\Hyperdark$.
\end{proof}

\subsection{$\Pi^0_1$~classes consisting of infinite transversals}
An easy consequence of Theorem~\ref{thm:main-first} is that for every
infinite finitely generated c.e.~algebra $A$ of finite type there exists a
nonempty $\Pi^0_1$ class containing only infinite transversals of the word
problem of $A$. Recall that a subset $\mathcal{A}$ of the Cantor space
$2^{\omega}$ is called a \emph{$\Pi^{0}_{1}$~class} if $\mathcal{A}$ has a
\emph{$\Pi^{0}_{1}$ definition}, i.e.\ is of the form $\mathcal{A}=\{A\in
2^{\omega}: (\QA{n})R(A,n)\}$, for some decidable predicate $R\subseteq
2^{\omega}\times \omega$. (The $\Pi^{0}_{1}$~classes are also known as the
\emph{effectively closed} subsets of the Cantor space; it is well-known that
a class $\mathcal{A} \subseteq 2^{\omega}$ is a $\Pi^{0}_{1}$~class if and
only if $\mathcal{A}$ coincides with the collection of the infinite paths of
some decidable tree).

\begin{lemma}\label{lem:transv-pi1}
For every ceer $R$, $\Tr(R)$ is a nonempty $\Pi^{0}_{1}$~class of the Cantor
space.
\end{lemma}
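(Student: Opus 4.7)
The plan is to handle nonemptiness and the $\Pi^0_1$ definability independently. Nonemptiness is immediate: the empty set is vacuously a transversal of any equivalence relation, so $\emptyset \in \Tr(R)$; alternatively, the principal transversal $T_R$ from Definition~\ref{def:principal-transversal} always lies in $\Tr(R)$.

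For the $\Pi^0_1$ part, my approach is to exploit the characterization, recalled just before the lemma, of $\Pi^0_1$ classes as the sets of infinite paths through decidable binary trees. Fixing a computable enumeration $(R_s)_{s \in \omega}$ of $R$, I would define
\[
\mathcal{T} := \{\sigma \in 2^{<\omega} : \text{for all distinct } x,y < |\sigma|,\ \sigma(x)=\sigma(y)=1 \Rightarrow (x,y) \notin R_{|\sigma|}\},
\]
which is visibly decidable and closed under initial segments. Equivalently, the condition ``$T$ is a transversal'' unfolds to the $\Pi^0_1$ formula
\[
(\forall x)(\forall y)(\forall s)\,\neg\bigl[\,x \ne y \,\wedge\, T(x) = 1 \,\wedge\, T(y) = 1 \,\wedge\, (x,y)\in R_s\,\bigr],
\]
whose matrix is decidable with $T$ as an oracle.

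All that remains is to verify that the infinite paths of $\mathcal{T}$ coincide with the transversals of $R$. One direction is immediate from monotonicity of the enumeration: every initial segment of any transversal lies in $\mathcal{T}$. For the converse, if $T$ is an infinite path of $\mathcal{T}$ and $x \ne y$ both belong to $T$, then for each stage $s$ a prefix $\sigma \prec T$ of length exceeding $\max(x,y,s)$ witnesses $(x,y) \notin R_s$; letting $s \to \infty$ gives $(x,y) \notin R$. No serious obstacle arises; the only subtle point, which motivates the stage parameter in the definition of $\mathcal{T}$, is that membership in $R$ is merely c.e.\ rather than decidable, so decidability of the tree is recovered by comparing $\sigma$ against the finite approximation $R_{|\sigma|}$ instead of against $R$ itself.
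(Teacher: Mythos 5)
Your proof is correct and takes essentially the same route as the paper: the paper simply observes that $\Tr(R)$ is defined by a universal formula whose matrix is co-c.e.\ (since the complement of a ceer is co-c.e.), which is exactly your $(\forall x)(\forall y)(\forall s)$ formula with the stage parameter made explicit; your decidable tree $\mathcal{T}$ is just the standard unfolding of that definition. Your explicit treatment of nonemptiness (via $\emptyset$ or the principal transversal) is a detail the paper leaves tacit, and everything checks out.
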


\begin{proof}
The claim follows from the observation that if $R$ is a ceer then
\[
\Tr(R)=\{T\in 2^\omega: (\forall\, x,y)[x,y \in T \; \& \; x
\ne y \Rightarrow x \, \mathrel{\cancel{R}} \, y]\},
\]
which provides a description of $\Tr(R)$ as a $\Pi^{0}_{1}$ set since the
complement of $R$ is co-c.e.\,.
\end{proof}

\begin{lemma}\label{lem:transversals-eff-closed}
If $A$ is an infinite finitely generated c.e.~algebra of finite type, then
$\Tr(=_A)$ contains a nonempty $\Pi^0_1$ class of the Cantor space,
consisting of infinite non-hyperimmune transversals.
\end{lemma}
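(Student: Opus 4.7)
The plan is to sharpen the construction in the proof of Theorem~\ref{thm:Kasymov-Bakh} so that it describes a whole $\Pi^0_1$ class of transversals rather than a single one. Fix a finite generating set $X \subseteq \omega$ for the given c.e.\ presentation of $A$, and build the ascending sequence $(X_i)_{i \in \omega}$ of finite sets exactly as there. The canonical index of $X_i$ is computable in $i$, so the function $m(i) := \max X_{i+1}$ is computable; moreover, since $A$ is infinite, $\bigcup_i X_i$ meets infinitely many $=_A$-classes.

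Define
\[
\mathcal{C} := \{T \in 2^\omega : T \in \Tr(=_A) \text{ and } |T \cap X_{i+1}| \ge i+1 \text{ for every } i \in \omega\}.
\]
The first step is to verify that $\mathcal{C}$ is $\Pi^0_1$. Membership in $\Tr(=_A)$ is $\Pi^0_1$ by Lemma~\ref{lem:transv-pi1}, while for each fixed $i$ the cardinality condition $|T \cap X_{i+1}| \ge i+1$ depends only on the restriction of the characteristic function of $T$ to the explicit finite set $X_{i+1}$, hence is clopen. Intersecting effectively over $i$ yields a $\Pi^0_1$ class.

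Nonemptiness comes for free from Theorem~\ref{thm:Kasymov-Bakh}: the transversal $T = \{y_i : i \in \omega\}$ produced there satisfies $y_j \in X_{j+1} \subseteq X_{i+1}$ for every $j \le i$, so $\{y_0,\dots,y_i\} \subseteq T \cap X_{i+1}$ and $T \in \mathcal{C}$. Finally, every $T \in \mathcal{C}$ is automatically infinite (the cardinality condition forces $|T| \ge i+1$ for all $i$) and non-hyperimmune: having at least $i+1$ elements inside $X_{i+1} \subseteq [0, m(i)]$ gives $p_T(i) \le m(i)$, and the Kuznecov--Medvedev--Uspenskii theorem (recalled in the proof of Corollary~\ref{cor:principal-all}) then says that $T$ is not hyperimmune.

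I do not anticipate a real obstacle, since the heart of the argument is already contained in the proof of Theorem~\ref{thm:Kasymov-Bakh}. The only subtlety is to encapsulate the combinatorics of that proof as a single density condition that is simultaneously (i) satisfied by the transversal built there, (ii) strong enough to force $p_T \le m$ and hence non-hyperimmunity of every $T \in \mathcal{C}$, and (iii) clopen in $T$, so that conjoining it with the $\Pi^0_1$ transversal condition of Lemma~\ref{lem:transv-pi1} still yields a $\Pi^0_1$ class; the uniform computability of the canonical indices of the $X_i$ makes all three points routine.
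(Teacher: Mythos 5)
Your proof is correct and follows the same strategy as the paper's: intersect $\Tr(=_A)$ with a uniformly clopen condition built from the finite sets $X_i$ of the Kasymov--Khoussainov construction, use Lemma~\ref{lem:transv-pi1} to get a $\Pi^0_1$ class, and witness nonemptiness with the transversal produced in the proof of Theorem~\ref{thm:Kasymov-Bakh}. The only substantive difference is the clopen condition you impose, and yours is actually the more careful choice. The paper's class requires $T\cap X_i\ne\emptyset$ for all $i>0$; since the $X_i$ are nested, that condition literally reduces to $T\cap X_1\ne\emptyset$ and so by itself does not force members of the class to be infinite or non-hyperimmune (the intended reading is presumably that $T$ meets each new ``layer'' of the construction). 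Your cardinality condition $|T\cap X_{i+1}|\ge i+1$ sidesteps this issue entirely: it is still clopen uniformly in $i$ because the canonical indices of the $X_i$ are computable, it is satisfied by the transversal $\{y_i: i\in\omega\}$ since $y_0,\dots,y_i$ are pairwise $=_A$-inequivalent (hence distinct) elements of $X_{i+1}$, it forces $|T|\ge i+1$ for every $i$, and it directly yields $p_T(i)\le m(i)$ and hence non-hyperimmunity via the Kuznecov--Medvedev--Uspenskii theorem. No gaps.
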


\begin{proof}
Let $A$ be as in the statement of the lemma and let $\{X_i: i \in \omega\}$
be the class of finite sets constructed in the proof of
Theorem~\ref{thm:Kasymov-Bakh}, starting with $X_0:=X$, a finite set of
generators of $A$. Consider
\[
\mathcal{A}:=\Tr(=_{A})\cap \{T \in 2^{\omega}: (\QA{i>0})[T\cap X_i\ne
\emptyset]\}.
\]
By its very definition, all members of $\mathcal{A}$ are infinite and
non-hyperimmune, and $\mathcal{A}$ is nonempty because it contains the
transversal $T$ built in the proof of Theorem~\ref{thm:Kasymov-Bakh}.
\end{proof}

\begin{remark}
By well-known basis theorems for  $\Pi^{0}_1$ classes of the Cantor space
(see e.g.~\cite{Jockusch-Soare:degrees-of-members}) we have that the class
$\mathcal{A}$ in the proof of Lemma~\ref{lem:transversals-eff-closed} always
contains transversals of special computability-theoretic interest, for
instance transversals of low Turing degree, and transversals of
hyperimmune-free degree (we recall that a set $X \leq_{\mathrm{T}}
\emptyset'$ is \emph{low} if $X' \equiv_{\mathrm{T}} \emptyset'$, and a set
is of \emph{hyperimmune-free degree} if its Turing degree does not contain
any hyperimmune set).

In particular we see that the class $\mathcal{A}$ in the proof of
Lemma~\ref{lem:transversals-eff-closed} contains transversals of
hyperimmune-free degree. Of course, none of these transversals can be the
principal transversal if $=_A$ is undecidable, since for every undecidable
ceer $R\in \Inf$, we have that its principal transversal $T_R$, being
co-c.e.\ and not decidable, has hyperimmune degree.
\end{remark}

\subsection{Hyperdark ceers: some examples}\label{sct:finite-classes}

By the discussion immediately following Definition~\ref{def:various-darkness}
we know that hyperdark ceers do exist, as if $X$ is a hypersimple set then
$R_X$ is hyperdark. This gives an example of a hyperdark ceer $R \notin
\FinCl \cup \InfCl$. Its cylindrification $R_\infty$ provides an example of a
hyperdark ceer $R_\infty \in \InfCl$. We now provide an example lying in
$\FinCl$. We first prove the following lemma.

\begin{lemma}\label{lem:all-high}
There exists a ceer $R\in \FinCl$ such that $\emptyset ' \le_{\mathrm{T}} T$,
for every infinite transversal $T$ of $R$.
\end{lemma}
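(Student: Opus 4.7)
The plan is to build $R \in \FinCl$ so that its principal transversal $T_R$ dominates the standard modulus of convergence for $\emptyset'$, and then invoke Lemma~\ref{lem:principal-all} to propagate this property to every infinite transversal. Fix an effective enumeration $(\emptyset'_s)_s$ of $\emptyset'$ with at most one new element enumerated per stage, let $\tau(n)$ denote the stage at which $n$ enters $\emptyset'$ (undefined if $n \notin \emptyset'$), and set $\mu(n) := \max(\{\tau(j):j \le n,\, j \in \emptyset'\}\cup\{0\})$. Any function $f \ge \mu$ computes $\emptyset'$, since $\emptyset'\cap[0,n] = \emptyset'_{f(n)}\cap[0,n]$.

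The construction proceeds by stages. Start with $R_0 := \Id_\omega$ and, at each stage $s$, write $T_{R_s} = \{x^s_0 < x^s_1 < x^s_2 < \cdots\}$. At stage $s+1$, if $n \ge 1$ is enumerated into $\emptyset'_{s+1}\smallsetminus \emptyset'_s$, run the following eviction loop: as long as the current value of $x_n$ (i.e., the $n$-th element of the principal transversal of the partially-built $R_{s+1}$) is less than $s+1$, enumerate the merge between this $x_n$ and the current $x_{n-1}$, a strictly smaller element. Each iteration deletes $x_n$ from the principal transversal and strictly increases its replacement, so only finitely many iterations are needed. Since every merge combines two adjacent intervals of the current class partition, an easy induction shows each $R_s$-class is a finite interval of $\omega$ and that $R_s \subseteq R_{s+1}$; hence $R := \bigcup_s R_s$ is a ceer.

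Set $x_k := \lim_s x^s_k$. Because $x^s_k$ can only change at a stage $s+1$ where some $j \le k$ is enumerated into $\emptyset'$, and there are at most $k+1$ such stages, $x_k$ is finite. Consequently $T_R = \{x_k : k \in \omega\}$ is infinite, the final class at position $k$ equals $[x_k, x_{k+1})$ which is finite, and $R \in \FinCl$. Further, whenever $j \ge 1$ enters $\emptyset'$ at stage $\tau(j)$ the eviction loop forces $x_j \ge \tau(j)$, and by strict monotonicity of $(x_k)_k$ also $x_k \ge \tau(j)$ for all $k \ge j$; maximizing over all $j \le k$ with $j \in \emptyset'$ yields $p_{T_R}(k) = x_k \ge \mu(k)$ for every $k \ge 1$. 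By Lemma~\ref{lem:principal-all}, every infinite transversal $T$ of $R$ satisfies $p_T \ge p_{T_R} \ge \mu$, so $p_T$ computes $\emptyset'$; since $p_T \le_{\mathrm{T}} T$, we conclude $\emptyset' \le_{\mathrm{T}} T$, as required.

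The only delicate point is keeping all classes finite despite the accumulation of merges. This is resolved by the observation above that each position $k$ of $T_R$ is revised only $|\emptyset'\cap[0,k]| \le k+1$ times, so the partition into finite intervals persists in the limit and every class is a finite interval.
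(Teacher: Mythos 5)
Your proof is correct and reaches the same structural skeleton as the paper's: build a ceer whose classes are consecutive finite intervals, force the principal transversal $T_R$ to grow fast by collapsing adjacent intervals, and then use Lemma~\ref{lem:principal-all} to transfer the growth condition to every infinite transversal. Where you genuinely diverge is in the computability-theoretic criterion driving the construction. The paper makes $e \mapsto p_{T_R}(e+1)$ \emph{dominate every partial computable function} and then invokes the Martin--Tennenbaum theorem that a set computing such a dominating function computes $\emptyset'$; this requires a (mild) priority ordering on requirements $\mathcal{R}_e$ and a finite-attention verification. You instead make $p_{T_R}$ \emph{majorize the settling-time function of} $\emptyset'$ and invoke only the elementary Modulus Lemma ($\emptyset'\cap[0,n]=\emptyset'_{f(n)}\cap[0,n]$ whenever $f$ majorizes the modulus). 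This buys a simpler, injury-free construction driven directly by the enumeration of $\emptyset'$, at the cost of being tailored to $\emptyset'$ specifically, whereas the paper's domination property is a more robust (and reusable) strengthening. Your termination and finiteness analysis (each position of $T_R$ is revised at finitely many stages, each eviction loop halts, all classes persist as finite intervals) is sound.

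One small slip: your eviction loop only fires for $n\ge 1$ (necessarily, since $x_0=0$ is immovable), yet you define $\mu(n)$ as a maximum over all $j\le n$ including $j=0$; if $0\in\emptyset'$ with large $\tau(0)$, the inequality $p_{T_R}(k)\ge\mu(k)$ as literally stated is not guaranteed. This is harmless --- the reduction only loses the single bit ``$0\in\emptyset'$'', which can be hard-coded, or one can simply trigger the eviction of $x_1$ when $0$ enters $\emptyset'$ --- but the statement should be adjusted to maximize over $1\le j\le k$.
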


\begin{proof}
The proof will make use of the well-known result (proved by
Martin~\cite{Martin1963}, and independently by Tennenbaum~\cite{Tennenbaum})
that a sufficient condition for $\emptyset' \le_{\mathrm{T}} A$ is the
existence of a function $g \le_{\mathrm{T}} A$ which \emph{dominates} every
partial computable function, i.e.\ for every $e$ there exists a number $i_e$
such that for every $i\ge i_e$, if $\phi_{e}(i)\downarrow$ then $\phi_{e}(i)<
g(i)$. Hence, to complete our task, it will be enough to build a ceer $R$
with only finite equivalence classes and such that the function $n \mapsto
p_{T_R}(n+1)$ dominates every partial computable function (where we recall
that $T_{R}$ denotes the principal transversal of $R$, and, given an infinite
set $A$, the symbol $p_A$ denotes the principal function of $A$). This will
show, as argued at the end of the proof, that for every infinite transversal
$T$ of $R$, the function $g(n):=p_{T}(n+1)$ dominates all partial computable
functions, and clearly $g\leq_{\mathrm{T}} T$.
	
\smallskip
\emph{Construction.} Without loss of generality, we assume that for every
$e,i,s$, if $\phi_{e,s}(i)\downarrow$ then $\phi_{e,s}(i)< s$. For every $e,
s$, let
$$
f_s(e):=  \max \left( \{0\} \cup \{ y: (\QE{i,j \le e})[\varphi_{i,s}(j)
\downarrow=y]\}\right).
$$
For all $e, s$, it holds that $f_s(e) \le f_s(e+1)$ and $f_{s}(e) \le
f_{s+1}(e)$. Moreover, for every $e$ there is a stage $u$ such that, for
every $s \ge u$, $f_s(e) = f_{u}(e)$. Hence, $f(e)= \lim\limits_{s
\rightarrow \infty} f_s(e)$ is well-defined for every $e$. To achieve our
goal, we will try to satisfy, for every $e$, the requirement
$$
\mathcal{R}_e: f(e) < p_{T_R}(e+1),
$$
while guaranteeing that each $R$-equivalence class is finite: this latter
goal will be achieved by building $R$ as a ceer yielding a partition of
$\omega$ in consecutive closed finite intervals. Notice that if $f(e) <
p_{T_R}(e+1)$ for every $e$, then $\phi_{e}(i)< p_{T_R}(i+1)$, for every pair
of numbers $e,i$ such that $e \le i$, and $\phi_{e}(i)\downarrow$.

For the requirements, consider the priority ordering $\mathcal{R}_i <
\mathcal{R}_j$, if $i < j$.

We define $R$ in stages, building a uniformly computable sequence
$\{R_s\}_{s\in \omega}$ of decidable ceers, such that $R_s\subseteq R_{s+1}$
and $R=\bigcup_{s\in \omega} R_s$. At each stage $s$, our approximation
$R_{s}$ to $R$ will be an equivalence relation partitioning $\omega$ in
consecutive closed finite intervals $\{I_{j,s}: j \in \omega\}$, in such a
way that the $R_s$-equivalence of any $x\geq s$ is a singleton.

\emph{Stage $0$}. Start up with $I_{j,0}:=\{j\}$, for every $j$.
Consequently, $R_{0}=\Id_\omega$.

\emph{Stage $s+1$}. We say that a requirement $\mathcal{R}_e$ \emph{requires
attention at stage $s+1$} if $f_{s+1}(e)>\max (I_{e,s})$. By our assumption
on how to approximate the partial computable functions, we may suppose that
$f_{s+1}(e)<s+1$. So, at stage $s+1$, see if there is a requirement
$\mathcal{R}_e$ with $e\leq s$ which requires attention. If not, then go to
stage $s+2$, leaving unchanged each $I_{j}$. Otherwise, let $\mathcal{R}_e$
be the highest priority requirement which requires attention. Define
\[
I_{j,s+1}
:=
\begin{cases}
I_{j,s}, &\textrm{if $j<e$},\\
\left[ \min (I_{e,s}), s \right], &\textrm{if $j=e$},\\
\{s+j-e\}, & \textrm{if $j>e$}.
\end{cases}
\]
We say in this case that $\mathcal{R}_e$ \emph{acts}; clearly, we have that
$f_{s+1}(e) \leq \max(I_{e,s+1})$ after acting. Notice that every $I_{j,s+1}$
is obtained by collapsing finitely many consecutive intervals into just one
interval; clearly $\max (I_{j,s}) \leq \max (I_{j,s+1})$, for every $j$. The
ceer $R_{s+1}$ is the ceer corresponding to the new family of intervals
$\{I_{j,s+1}: j\in \omega\}$; clearly $R_{s} \subseteq R_{s+1}$; notice also
that the $R_{s+1}$-equivalence of any $x\geq s+1$ is a singleton. Go to the
next stage.

\smallskip
\emph{Verification.} A straightforward argument by induction on the priority
of the requirements shows that  for every $e$ the requirement
$\mathcal{R}_{e}$ eventually stops requiring attention, the set $I_{e}$
reaches its limit, and $f(e)\leq \max (I_e)$. To see these claims, let $s_0$
be the least stage such that for all $i < e$, we have that $\mathcal{R}_{i}$
does not receive attention and $I_{i}$ does not change at any stage $s>s_0$.
Notice that $\mathcal{R}_{e}$ may require attention at most finitely many
times after $s_{0}$, as $f_{s}(e)$ may change only finitely many times, and
if $\mathcal{R}_e$ acts at a stage $u$ such that $f_{u}(e)=f(e)$, then it
will never require attention again at any later stage $t$ since $f(e)\leq
\max (I_{e,u}) \leq \max (I_{e,t})$ for every $t \ge u$. As a consequence,
either $\mathcal{R}_e$ never requires attention at any $s \ge s_0$, and thus
$I_{e,s}=I_{e, s_0}$ for every $s\ge s_0$,\; or $\mathcal{R}_e$ requires
attention for the last time at some $s_1\ge s_0$, giving that $I_{e,s}=I_{e,
s_1}$ for every $s \ge s_1$. In either case $I_{e,s}$ reaches its limit
$I_e$, and $f(e) \leq \max (I_e)$.

Having shown that for every $e$, the interval $I_e$ reaches its limit and
$f(e) \leq \max (I_e)$, it is now straightforward to conclude that $f(e) <
\min (I_{e+1})=p_{T_{R}}(e+1)$. By Lemma~\ref{lem:principal-all}, if $T$ is
any infinite transversal of $R$, then $p_{T_{R}}(e+1)\leq p_T(e+1)$, and thus
the function $g(e):=p_T(e+1)$ dominates all partial computable functions.
\end{proof}

\begin{theorem}\label{thm:all-high}
$\Hyperdark \cap \FinCl \ne \emptyset$.
\end{theorem}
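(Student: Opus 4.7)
The plan is to show that the ceer $R$ produced by Lemma~\ref{lem:all-high} already lies in $\Hyperdark \cap \FinCl$. By construction $R \in \FinCl$, and since $R$ partitions $\omega$ into finite equivalence classes it must have infinitely many of them, so $R \in \Inf$ as well. By Corollary~\ref{cor:principal-all}, it is enough to verify that the principal transversal $T_R$ is hyperimmune.

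The key step is to observe that the proof of Lemma~\ref{lem:all-high} actually establishes something stronger than its stated Turing-degree conclusion: the function $g(n) := p_{T_R}(n+1)$ dominates every partial computable function (this is exactly the inequality $f(e) < p_{T_R}(e+1)$ from the verification, combined with the definition of $f$). From this, I would argue that $p_{T_R}$ cannot be majorized by any total computable function: if $h$ were such, then the computable shift $h'(n):=h(n+1)$ would satisfy $h'(n) \ge p_{T_R}(n+1) = g(n)$ for every $n$, contradicting the domination of $h'$ by $g$. Hence, by the Kuznecov--Medvedev--Uspenskii theorem (already invoked in the proof of Corollary~\ref{cor:principal-all}), $T_R$ is hyperimmune, and we conclude $R \in \Hyperdark \cap \FinCl$.

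The main obstacle is essentially bookkeeping: one must extract the domination statement for $g$ from inside the proof of Lemma~\ref{lem:all-high}, rather than rest content with the degree-theoretic statement $\emptyset' \le_{\mathrm{T}} T$. This extra strength is genuinely needed, because $\emptyset' \le_{\mathrm{T}} T$ alone does \emph{not} force $T$ to be hyperimmune; for instance, $\emptyset'$ itself is c.e.\ and therefore not hyperimmune, since the principal function of any infinite c.e.\ set $A = \{a_0, a_1, \dots\}$ (enumerated without repetitions) is majorized by the computable function $n \mapsto \max\{a_0, \dots, a_n\}$. Once the domination property is read off from the construction, the theorem follows immediately.
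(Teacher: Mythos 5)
Your proof is correct, but it takes a genuinely different route from the paper's. The paper deduces the theorem from the \emph{statement} of Lemma~\ref{lem:all-high} alone: it proves the general implication $\NLow \subseteq \Hyperdark$ (if some infinite transversal is not hyperimmune, one builds a nonempty $\Pi^0_1$ class of infinite transversals and applies the Low Basis Theorem to extract a low one, which cannot compute $\emptyset'$), and then notes that the ceer of the lemma lies in $\NLow$. You instead reopen the \emph{proof} of the lemma, extract the domination property $f(e) < p_{T_R}(e+1)$ directly, and conclude via the Kuznecov--Medvedev--Uspenskii characterization and Corollary~\ref{cor:principal-all} that $T_R$ is hyperimmune. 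Your shift argument ($h'(n):=h(n+1)$ would be a total computable function not dominated by $g$) is sound, and your route is more elementary: it bypasses both the Low Basis Theorem and, in effect, the Martin--Tennenbaum domination theorem that the lemma invokes only to phrase its conclusion degree-theoretically. What the paper's detour buys is the reusable general fact $\NLow \subseteq \Hyperdark$ and the degree-theoretic conclusion itself, which is exploited right after the theorem to observe that every infinite transversal of $R$ has hyperhyperimmune degree. One small correction: your claim that the extra (domination) strength is ``genuinely needed'' is overstated. Your counterexample only shows that a \emph{single} transversal computing $\emptyset'$ need not be hyperimmune; the lemma's universally quantified conclusion --- \emph{every} infinite transversal computes $\emptyset'$ --- does suffice for hyperdarkness, exactly as the paper's Low Basis Theorem argument demonstrates. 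The domination property is needed for your route, not for the theorem.
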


\begin{proof}
We first show that if $R\in \Inf$ is a ceer such that $\emptyset'
\le_{\mathrm{T}} T$ for every infinite transversal $T$ of $R$, then $R \in
\Hyperdark$. Clearly, for such an $R$ we have $R \in \NLow$, where $\NLow$
denotes the class of infinite ceers possessing no low infinite transversal.
On the other hand, one can show that $\NLow \subseteq \Hyperdark$. To see
this, assume that $U$ is an infinite ceer such that $U \notin \Hyperdark$,
thus $U$ possesses an infinite transversal $T$ which is not hyperimmune as
witnessed by a strong disjoint array $(D_{f(n)})_{n \in \omega}$, and
consider the class of sets
\[
\mathcal{A}:=\Tr(U)\cap \{X \in 2^\omega: (\QA{n})[X \cap D_{f(n)} \ne \emptyset ]\}.
\]
It is easy to see that $\mathcal{A}$ is a $\Pi^{0}_{1}$~class of the Cantor
space, as by Lemma~\ref{lem:transv-pi1} $\mathcal{A}$ is the intersection of
two $\Pi^{0}_{1}$~classes. Moreover, by the very definition of $\mathcal{A}$,
it is clear that all members of $\mathcal{A}$ must be infinite, and
$\mathcal{A}\ne \emptyset$ since $T \in \mathcal{A}$. Therefore, by the Low
Basis Theorem for $\Pi^0_1$ classes (see e.g.~\cite{Soare-Book-new}),
$\mathcal{A}$ contains a low member, that is an infinite low transversal of
$U$, thus $U \notin \NLow$. By contrapositive, this shows that $\NLow
\subseteq \Hyperdark$.

The theorem now follows from Lemma~\ref{lem:all-high}.
\end{proof}

Theorem~\ref{thm:all-high} exhibits a ceer $R\in \FinCl$ such that every
infinite transversal of $R$ is of hyperhyperimmune degree. In fact, our
example is built so that every infinite transversal of $R$ computes
$\emptyset'$, and thus it is of hyperhyperimmune degree by
\cite[Corollary~4.2]{Jockusch-hhi}. On the other hand at least one of the
infinite transversals of $R$ is not hyperhyperimmune, because our next
theorem shows that hyperhyperdarkness becomes an empty notion, when
considering only ceers in $\FinCl$.

\begin{theorem}\label{thm:finite-nohhd}
$\Hyperhyperdark \cap  \FinCl =\emptyset$.
\end{theorem}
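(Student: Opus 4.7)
The plan is, given a ceer $R \in \FinCl \cap \Inf$, to construct an infinite transversal $T$ of $R$ together with a weak disjoint array $(W_{f(n)})_{n\in\omega}$ each of whose members intersects $T$; since such a $T$ cannot be hyperhyperimmune, this will yield $R \notin \Hyperhyperdark$.

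The construction will be a finite-injury priority argument. In parallel I build a total computable function $f$ and a sequence of tentative representatives $c_n$ whose eventual limit values $c_n^{(\infty)}$ are the elements of $T$. The requirement $\mathcal{P}_n$ asks that $c_n$ eventually stabilize on a value lying in an $R$-class distinct from those of $c_0, \ldots, c_{n-1}$. At stage $s$, processing $n \le s$ in priority order, I undefine $c_n$ if $R_s$ has revealed that its current value is $R_s$-equivalent to some $c_k$ with $k < n$; if $c_n$ is undefined, I choose $c_n$ to be the least $y$ avoiding $\bigcup_k W_{f(k)}$ (as recorded at the current substage) such that $y\, \cancel{\rel{R_s}}\, c_k$ for every $k < n$, and I enumerate $y$ into $W_{f(n)}$. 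Enumerating into $W_{f(n)}$ only the past values of $c_n$, together with the exclusion clause at the moment of selection, makes the $W_{f(n)}$'s pairwise disjoint; each $W_{f(n)}$ will be finite once $c_n$ stabilizes, and $c_n^{(\infty)} \in W_{f(n)}$ by construction.

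The main obstacle, and the place where the hypothesis $R \in \FinCl$ is essential, is to verify that each $c_n$ does stabilize; without finite classes this would be an infinite-injury situation. By strong induction one may assume $c_0,\ldots,c_{n-1}$ have already reached their limits, so that the ``danger zone'' $\bigcup_{k<n}[c_k^{(\infty)}]_R$ that can still cause injury to $c_n$ is a finite union of finite $R$-classes, hence finite. Since every re-selection yields a value not previously used by any $c_k$, only finitely many successive choices for $c_n$ can fall into this bad set, and after its final injury $c_n^{(\infty)}$ lies in an $R$-class disjoint from all the higher-priority ones and remains fixed forever; eligibility of a candidate $y$ at each step follows from the fact that $\bigcup_k W_{f(k)}$ is always finite and the complement of the $R_s$-classes of $c_0,\ldots,c_{n-1}$ is cofinite. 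Setting $T := \{c_n^{(\infty)} : n \in \omega\}$ then gives an infinite transversal of $R$, and $(W_{f(n)})_{n\in\omega}$ is the weak disjoint array hitting $T$ in each set, witnessing that $T$ is not hyperhyperimmune.
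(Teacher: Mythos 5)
Your proof is correct and follows essentially the same strategy as the paper's: both build a uniformly c.e.\ (weak) disjoint array of finite sets by injecting a fresh candidate whenever the approximation $R_s$ reveals a collision with a higher-priority class, both use the hypothesis $R \in \FinCl$ to argue that each set of the array stabilizes and is therefore finite, and both extract the transversal from the limit elements. The only difference is bookkeeping: the paper adds a fresh element to the least $F_n$ with $F_{n,s} \subseteq \bigcup_{i<n}[F_{i,s}]_{R_s}$ and then takes $t_n := \min\bigl(F_n \smallsetminus \bigcup_{m<n}[F_m]_R\bigr)$, whereas you phrase the same mechanism as a finite-injury argument with movable markers $c_n$ whose past values populate the array.
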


\begin{proof}
Suppose that $R \in \FinCl$. We are going to show that there exists a
transversal $T \in \Tr(R)$ and a weak disjoint array $(F_n)_{n \in \omega}$
which intersects $T$, so that $T$ is not hyperhyperimmune. Throughout the
proof we refer to some fixed computable approximation $\{R_s: s \in \omega\}$
to $R$, namely a sequence of uniformly decidable equivalence relations
$\{R_s\}_{s\in \omega}$, such that $R_0 := \Id_\omega$, $R_s \subseteq
R_{s+1}$, and $R := \bigcup_{s \in \omega} R_s$: every ceer has such an
approximation, see e.g.\ \cite[Lemma~1.4]{Andrews-Badaev-Sorbi}. We construct
$(F_n)_{n \in \omega}$ in stages, so that at stage $s$ we define $F_{n,s}$
for every $n$, and $(F_{n,s})_{n, s \in \omega}$ is a strong array which
provides a uniformly computable approximation to the desired weak array
$(F_n)_{n \in \omega}$, where $F_n:=\bigcup_{s\in \omega} F_{n,s}$.

At stage $0$ let $F_{n,0}:=\emptyset$ for every $n$. At stage $s+1$, let $n$
be the least number such that $F_{n,s} \subseteq \bigcup_{i<n}
[F_{i,s}]_{R_s}$ (such an $n$ exists since all but finitely many $F_{m,s}$
are empty). Pick the least fresh number $x$ (thus $x$ does not lie in any
$F_{m,s}$) and define $F_{n,s+1}:=F_{n,s}\cup \{x\}$; for all $m\ne n$ let
$F_{m,s+1}:=F_{m,s}$.

\smallskip
\emph{Verification.} The array $(F_n)_{n \in \omega}$ is clearly disjoint. An
easy inductive argument shows: for every $n$ there exists a least stage $s_n$
such that $F_{n,s}=F_{n,s_n}$ for all $s > s_n$ (we use here that $R\in
\FinCl$); for all $n$, $F_n \smallsetminus \bigcup_{m<n} [F_m]_R \ne
\emptyset$. Therefore $(F_n)_{n \in \omega}$ is a weak disjoint array, such
that for every $n$, the set $T_n:=F_n \smallsetminus \bigcup_{m<n}[F_m]_R$ is
nonempty. For every $n$, pick $t_n:=\min (T_n)$. Then $T:=\{t_n: n \in
\omega\}$ is an infinite transversal of $R$, which is not hyperimmune since
it is intersected by the disjoint weak array $(F_n)_{n \in \omega}$.
\end{proof}

\section{Ceers $\mathrm{c}$-realized by finitely generated semigroups}
\label{sct:positive}

In the previous section we have isolated a class of infinite ceers (the
hyperdark ceers) which are not $\mathrm{c}$-realized by any finitely
generated algebra of finite type. If we restrict our attention to finitely
generated semigroups, and denote by $\mathbf{S}_{f.g.}$ the structure of
$\mathrm{c}$-degrees of ceers which are $\mathrm{c}$-realized by word
problems of finitely generated\ semigroups, it follows by the results of the
previous section that $\mathbf{S}_{f.g.} \cap \Hyperdark_{\mathrm{c}}
=\emptyset$. In the present section we will face the opposite problem, trying
to individuate classes of $\mathrm{c}$-degrees of ceers which lie in
$\mathbf{S}_{f.g.}$. It is clear that every finite $\mathrm{c}$-degree is in
$\mathbf{S}_{f.g.}$ since every finite ceer is $\mathrm{c}$-realized by some
finite finitely presented group. Therefore, we will confine ourselves to
discuss how large the class $\mathbf{S}_{f.g.}^\infty:= \mathbf{S}_{f.g.}
\cap \Inf_{\mathrm{c}}$ is, i.e.\ the class consisting of the
$\mathrm{c}$-degrees in $\mathbf{S}_{f.g.}$ containing infinite ceers, or
equivalently of the $\mathrm{c}$-degrees of ceers realized by word problems
of infinite finitely generated~semigroups.

We will work over the free semigroup on two generators: hence, let us
consider the alphabet $X:=\{a,b\}$ and denote by $X^+$ the set of nonempty
finite words of elements of $X$, with $\lambda$ denoting the empty word, and the
binary operation on $X^+$ given by the concatenation of words. It is well
known that $X^+$ together with this binary operation is the free semigroup on
two generators. In the rest of this discussion, $X^+$ will be identified by
coding with the set of natural numbers $\omega$.

\subsection{Finitely generated semigroups with dark word problem}

First, we remark that, for the case of semigroups, Theorem
\ref{thm:main-first} is optimal, in the sense that, while the word problem of
any finitely generated semigroup cannot be $\mathrm{c}$-equivalent to a
hyperdark ceer, there exist two-generator semigroups with dark word problem.
The following example is taken from Hirschfeldt and
Khoussainov~\cite{Hirschfeldt-Khoussainov} (see, in particular, Lemmas 2.2,
2.3, 2.4 and Theorem~3.7).

\begin{example}\label{example:Hirschfeldt-Khoussainov}
\cite{Hirschfeldt-Khoussainov} Let us first introduce some terminology and
notation. Given words $x,y \in X^+$, we say that $y$ is a \emph{subword} of
$x$ if there are words $u_1,u_2 \in X^+ \cup \{\lambda\}$ such that $x = u_1
y u_2$. Similarly, $y \in X^+$ is a \emph{subword} of an infinite sequence $f
\in X^{\omega}$ if there exist a word $u \in X^+ \cup \{\lambda\}$ and an
infinite sequence $g \in X^{\omega}$ such that $f = uyg$. We say that a word
or an infinite sequence $\alpha$ \emph{avoids} a finite word $y$ whenever
$\alpha$ does not contain $y$ as a subword. Finally, for every $l \in
\omega$, let us denote by $X^{\le l}$ and $X^{\ge l}$ the set of words on the
alphabet $X$ of length, respectively, at most $l$ and at least $l$.

For $Z \subseteq X^+$, let
$$
\overline{Z} := \{ u \in X^+: \ (\exists z \in Z)(\exists u_1, u_2 \in X^+
\cup \{ \lambda \}) [u = u_1 z u_2]   \}.
$$
In other words, $\overline{Z}$ is the set of words in $X^+$ containing a word
of $Z$ as a subword. Notice that $Z \subseteq \overline{Z}$. It is easy to
see that the unidimensional ceer $R_{\overline{Z}}$ is a congruence of $X^+$
for every set $Z \subseteq X^+$: hence, for any c.e.~set $Z \subseteq X^+$,
we get a finitely generated c.e.~semigroup $S_Z = X^+/_{R_{ \overline{Z}}}$.

By the discussion following Definition~\ref{def:various-darkness} we know
that a unidimensional ceer $R_X$ is dark if and only if $X$ is simple.
Therefore, in order to get that $S_Z$ is infinite with a dark word problem,
it suffices to build a simple set $Z \subseteq X^+$ such that $\overline{Z}$
is coinfinite: this ensures that $\overline{Z}$ is simple, being a coifinite
c.e.~superset of a simple set. This can be achieved using the following
result by Miller~\cite[Corollary 2.2]{Miller}: If a set $Y \subseteq X^+$
contains, for each $i$, at most one word of length $i+5$ and no words of
length $\leq 4$, then there is an infinite sequence $f \in X^{\omega}$ such
that $f$ avoids all the words in $Y$.

So suppose that $Z\subseteq X^+$ is a set such that $Z \cap X^{\le k+4}$
contains at most $k$ elements for every number $k$. From this and using the
fact that avoiding a word implies avoiding all its extensions, it is easy to
build a set $Y\subseteq X^+$, such that $Y$ contains exactly a string of
length $i+5$ for every number $i$, it contains no string of length $\leq 4$,
and if a string avoids all words in $Y$ then it avoids all words in $Z$ as
well. By Miller's result there is an infinite sequence $f\in X^\omega$
avoiding $Y$, and thus there are infinitely many finite words avoiding $Z$,
implying that $\overline{Z}$ is coinfinite.

Therefore, to complete our example it remains only to show that there exists
a simple set $Z \subseteq X^+$ such that for every $k$, $Z$ contains at most
$k$ words of length $\leq k+4$. This follows along the lines of the standard Post's
construction of a simple set (see, e.g.~\cite{Soare-Book-new}, Theorem 5.2.3):
Given a standard numbering $\{W_i: i \in \omega\}$ of the c.e. subsets of $X^+$, we let
$$Z = \{ u \in X^+: (\exists i)(\exists s)[u \in W_{i,s+1} \cap X^{\ge i+5} \ \text{and} \ W_{i,s} \cap X^{\ge i+5} = \emptyset] \}.$$
In other words, we enumerate
each set $W_i$ until a word $u$ of length at least $i+5$ appears: whenever this happens,
we enumerate $u$ into $Z$, and we do not put any more elements from $W_i$ into $Z$.
Notice that $Z$ is simple as every infinite c.e.~set $W_i$ must contain words of arbitrary length.
Moreover, by definition of $Z$, a word $u \in Z \cap X^{\le k+4}$ must have been taken from a set
$W_i$ with $i \le k-1$, which ensures that $Z$ contains at most $k$ such words.

\end{example}

As already anticipated in the introduction, we also notice that the above
example has been strikingly strengthened by Myasnikov and Osin in
\cite{Myasnikov-Osin}, where it is even built a finitely generated c.e.~group
with dark word problem.

\subsection{Finitely generated semigroups with non-dark word problem}

We partition $X^+$, the set of nonempty words on the alphabet $\{a,b \}$, as
follows:
\begin{enumerate}
\item $C:=\{ab^ia: i \ne 0\}$ (where for any string $u$, we denote by $u^i$
    the string obtained by concatenating $i$ times $u$ with itself). We
    refer to the elements of $C$ as \emph{coding words}.
\item $C_+$ consists of the words in $X^+$ which properly contain coding
    words as subwords, i.e.
    \[
C_+:=\{w \in X^+\smallsetminus C:  (\QE{v,v' \in X^+ \cup \{\lambda\}})
(\QE{u \in C})[w=vuv']\}.
\]

\item $C_{-}:=X^{+}\smallsetminus (C\cup C_+)$.
\end{enumerate}
Observe that the sets $C_{-}$, $C$, and $C_+$ are computable, infinite, and
they partition $X^+$.

\smallskip

Next, given  any ceer $R$, let $S(R)$ be the two-generator semigroup
presented by
\[
S(R):=\langle\, X \mid \{ab^{i+1}a\, =_{S(R)}\, ab^{j+1}a :
i \mathrel{R} j\ \}\cup \{v \,=_{S(R)}\, w : v,w \in C_{+}\}\,\rangle.
\] Thus, the
$=_{S(R)}$-closure of the set $C$ is partitioned in classes, with
representatives $ab^ia$ for each $i \ne 0$, and $ab^ia =_{S(R)} ab^ja$ if and
only if $(i-1) \mathrel{R} (j-1)$. The $={_{S(R)}}$-closure
$[C_+]_{={_{S(R)}}}$ of $C_+$ consists of just an equivalence class, say the
$=_{S(R)}$-equivalence class of $aaba$. Finally the set $[C_-]_{={_{S(R)}}}$
consists of an infinite bunch of singletons (namely the singletons of words
which avoid the words of the form $ab^ia$ with $i\neq 0$).

Recall that the \emph{uniform join} $U\oplus V$ of two equivalence relations
$U,V$ on $\omega$ is the equivalence relation on $\omega$ defined as $U\oplus
V:=\{(2x,2y): x\mathrel{U} y\} \cup \{(2x+1,2y+1): x\mathrel{V} y\}$.

\begin{lemma}\label{lem:R+Id realizes f.g. semi}
$=_{S(R)}$ is $\mathrm{c}$-equivalent (in fact, isomorphic, as defined in the
paragraph following Definition~\ref{def:reducibility}) with $R \oplus
\Id_\omega$, where we recall that $\Id_\omega$ denotes the equality relation
on $\omega$.
\end{lemma}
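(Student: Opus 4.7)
The plan is to describe explicitly the $=_{S(R)}$-equivalence classes of $X^+$ by analyzing the defining relations, and then to exhibit a single computable surjection $X^+ \to \omega$ which witnesses the isomorphism with $R \oplus \Id_\omega$.

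First, I would establish that $=_{S(R)}$ partitions $X^+$ into three kinds of classes: one class per $R$-equivalence class, realized inside $C$ via the bijection $ab^{k+1}a \leftrightarrow k$; the whole of $C_+$ as a single class; and the singleton classes $\{w\}$ for every $w \in C_-$. The nontrivial point is to verify that the congruence generated by the two batches of defining relations does not cause any unexpected identifications. This reduces to a combinatorial observation about coding words: if $u = ab^{i+1}a \in C$ and $xuy$ is obtained by placing $u$ in a context with $x \ne \lambda$ or $y \ne \lambda$, then $xuy$ properly contains the coding subword $u$ and cannot itself be a coding word (a coding word $ab^j a$ has exactly one pair of $a$'s, separated by $j$ $b$'s, so no other coding word can sit as a proper subword of it), hence $xuy \in C_+$; likewise, for $u \in C_+$ and arbitrary $x,y$, the word $xuy$ still properly contains a coding word and so lies in $C_+$. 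Therefore every single-step application of a defining relation is either the trivial case $x = y = \lambda$ (yielding precisely the $R$-identifications on $C$ and the absorbing identification on $C_+$), or it identifies two words both of which lie in $C_+$, where they are already merged. In particular no identification crosses between $C$, $C_+$, and $C_-$, and the elements of $C_-$ remain in singleton classes.

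Once this class structure is in hand, I would define $g : X^+ \to \omega$ by setting $g(ab^{k+1}a) := 2k$ on $C$, $g(w) := 1$ on $C_+$, and $g(w_i) := 2i+3$ on $C_-$, where $(w_i)_{i\in\omega}$ is a fixed computable bijective enumeration of $C_-$ (which exists, since $C_-$ is a computable infinite subset of $X^+$). The three sets $C, C_+, C_-$ being computable and disjoint, $g$ is computable. A straightforward case analysis according to which of $C, C_+, C_-$ contains the inputs shows that $g$ is a reduction of $=_{S(R)}$ to $R \oplus \Id_\omega$: pairs from $C$ correspond to pairs of even numbers matching $R$; elements of $C_+$ all map to $1$, matching the coarse identification; elements of $C_-$ map injectively to odd numbers $\geq 3$, which are pairwise $R \oplus \Id_\omega$-inequivalent; and pairs drawn from different regions land in distinct $R \oplus \Id_\omega$-classes, by parity or by being the only element mapped to $1$.

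Finally, $\mathrm{range}(g) = \{2k : k \in \omega\} \cup \{1\} \cup \{2i+3 : i \in \omega\} = \omega$, so $g$ meets every $R \oplus \Id_\omega$-equivalence class. By the definition of isomorphism of equivalence relations recalled immediately after Definition~\ref{def:reducibility}, this makes $g$ an isomorphism from $=_{S(R)}$ onto $R \oplus \Id_\omega$, and in particular witnesses $=_{S(R)} \equiv_{\mathrm{c}} R \oplus \Id_\omega$. I expect the main obstacle to be the structural analysis carried out in the first step: one has to make rigorous that the congruence closure of the defining relations really does not collapse anything beyond the announced partition, which amounts to the combinatorial fact about coding subwords highlighted above.
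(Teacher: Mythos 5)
Your proposal is correct and follows essentially the same route as the paper: both arguments rest on the partition of $X^+$ into $C$, $C_+$, $C_-$ and on identifying the relation induced on these pieces with $R$, $\Id_1$, and $\Id_\omega$ respectively; the paper phrases the conclusion via restrictions and uniform joins ($=_{S(R)} \equiv_{\mathrm{c}} R \oplus \Id_1 \oplus \Id_\omega \equiv_{\mathrm{c}} R \oplus \Id_\omega$), while you package it as a single explicit surjective reduction, which is the same content. A minor merit of your write-up is that it spells out the combinatorial verification that the congruence does not identify anything across $C$, $C_+$, $C_-$ (no coding word properly contains another, and contexts push everything into $C_+$), a point the paper asserts in the paragraph preceding the lemma without proof.
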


\begin{proof}
We recall the notion of the \emph{restriction} of a ceer $R$ to a nonempty
c.e.\ set $W$, see for instance \cite{AS-initial}: fix a computable surjection
$\pi: \omega \rightarrow W$, and define $R\!\restriction W$ to be the ceer
\[
x \rel{R\!\restriction W} y \Leftrightarrow \pi(x) \rel{R} \pi(y).
\]
It is immediate to see that, up to $\equiv_{\mathrm{c}}$, $R\!\restriction W$
does not depend on the chosen computable surjection.

By pairwise disjointness of the $=_{S(R)}$-closures of the computable sets
$C$, $C_+$, $C_-$ which yield a partition of $\omega$, we have that
\[
=_{S(R)} \equiv_{\mathrm{c}} \big( (=_{S(R)}\!\restriction C) \oplus
(=_{S(R)}\!\restriction C_+) \oplus
(=_{S(R)}\!\restriction C_-) \big).
\]
On the other hand, it is easily seen that $=_{S(R)}\!\restriction C
\equiv_{\mathrm{c}} R$, $=_{S(R)}\!\restriction C_+ \equiv_{\mathrm{c}}
\Id_1$ (where $\Id_1$ is the ceer with just one equivalence class) and
$=_{S(R)} \!\restriction C_- \equiv_{\mathrm{c}} \Id_\omega$. Hence $=_{S(R)}
\equiv_{\mathrm{c}} R \oplus \Id_1 \oplus \Id_\omega$, and from $\Id_\omega
\equiv_{\mathrm{c}} \Id_1 \oplus \Id_\omega$ we conclude
\[
=_{S(R)} \equiv_{\mathrm{c}} R\oplus \Id_\omega.
\]
Finally it is easy to see that all the bi-equivalences $\equiv_{\mathrm{c}}$
mentioned in the proof are in fact isomorphisms of equivalence relations, so
$=_{S(R)}$ is isomorphic with $R\oplus \Id_\omega$.
\end{proof}

\begin{theorem}\label{thm:R+Id realizes f.g. semi}
If $R$ is a ceer such that $R \equiv_{\mathrm{c}} R \oplus \Id_\omega$, then
there is a two-generator semigroup $S$ such that $=_{S} \equiv_{\mathrm{c}}
R$.
\end{theorem}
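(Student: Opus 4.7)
The plan is to take $S := S(R)$, the two-generator semigroup constructed immediately before Lemma~\ref{lem:R+Id realizes f.g. semi}, and to derive the conclusion directly from that lemma together with the hypothesis. Since $S(R)$ is presented over the alphabet $X = \{a,b\}$, it is by construction a two-generator c.e.~semigroup, so it only remains to compute the $\mathrm{c}$-degree of its word problem.

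By Lemma~\ref{lem:R+Id realizes f.g. semi}, the word problem of $S(R)$ satisfies
\[
=_{S(R)}\ \equiv_{\mathrm{c}}\ R \oplus \Id_\omega .
\]
The hypothesis of the theorem is precisely that $R \equiv_{\mathrm{c}} R \oplus \Id_\omega$. Chaining the two $\mathrm{c}$-equivalences via transitivity of $\equiv_{\mathrm{c}}$ yields
\[
=_{S}\ \equiv_{\mathrm{c}}\ R,
\]
which is the desired conclusion.

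There is essentially no obstacle to overcome, since all the work has been done in the construction of $S(R)$ and in Lemma~\ref{lem:R+Id realizes f.g. semi}: the theorem is a one-line consequence of that lemma. The only thing worth flagging is that the semigroup $S(R)$ depends on the chosen ceer $R$, but since the lemma holds for every ceer $R$, no uniformity is required here beyond what has already been established.
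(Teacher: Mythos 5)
Your proposal is correct and is exactly the paper's argument: the paper's proof is the one-liner ``Given $R$, take $S:=S(R)$,'' relying on Lemma~\ref{lem:R+Id realizes f.g. semi} and the hypothesis $R \equiv_{\mathrm{c}} R \oplus \Id_\omega$ precisely as you do. You have merely spelled out the transitivity step that the paper leaves implicit.
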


\begin{proof}
Given $R$, take $S:=S(R)$.
\end{proof}

\begin{remark}
Notice that $=_{S(R)}\in \Inf \smallsetminus \mathbf{dark}$. The ceers in
$\Inf \smallsetminus \mathbf{dark}$ are frequently called \emph{light}, and
their class is denoted as $\mathbf{light}$, see e.g.~\cite{joinmeet}.
\end{remark}

\subsection{How large is $\mathbf{S}_{f.g.}$?}

We conclude this section with a few remarks which suggests that the class of
ceers which are $\mathrm{c}$-realized by finitely generated semigroups is
unexpectedly large.

Theorem~\ref{thm:R+Id realizes f.g. semi} allows indeed to show that
$\mathbf{S}_{f.g.}^\infty$ (in fact the subclass of
$\mathbf{S}_{f.g.}^\infty$ consisting of the $\mathrm{c}$-degrees of light
ceers) embeds rich structures. First, recall the reducibility notion of the
following definition, where, for every $n\ge 1$, we fix a ceer $\Id_n$ with
exactly $n$ equivalence classes.

\begin{emdef}\cite{joinmeet}
For ceers $R$ and $S$,  $R \leq_{\mathcal{I}} S$ if and only if
$R\leq_{\mathrm{c}} S \oplus \Id_n$, for some number $n \ge 1$.
\end{emdef}

It is known (see \cite{joinmeet,andrews2020theory}) that the structure of the
$\mathcal{I}$-degrees of dark ceers, denoted as $\mathbf{dark}_{/
\mathcal{I}}$, is in a reasonable sense as complicated as possible: its first
order theory is computably isomorphic with first order arithmetic.

\begin{corollary}\label{cor:emb1}
$\mathbf{dark}_{/ \mathcal{I}}$ embeds into $\mathbf{S}_{f.g.}^\infty$.
\end{corollary}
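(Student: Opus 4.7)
I will define
\[
\Phi : \mathbf{dark}_{/\mathcal{I}} \longrightarrow \mathbf{S}_{f.g.}^\infty, \qquad \Phi([R]_{\mathcal{I}}) := [R \oplus \Id_\omega]_{\mathrm{c}},
\]
with $R$ ranging over dark ceers, and show it is an order-embedding. That $\Phi$ lands in $\mathbf{S}_{f.g.}^\infty$ is an immediate application of Theorem~\ref{thm:R+Id realizes f.g. semi}: since $\Id_\omega \oplus \Id_\omega \equiv_{\mathrm{c}} \Id_\omega$, the ceer $R \oplus \Id_\omega$ satisfies $(R \oplus \Id_\omega) \oplus \Id_\omega \equiv_{\mathrm{c}} R \oplus \Id_\omega$, so the theorem produces a two-generator semigroup $S$ with $=_S \equiv_{\mathrm{c}} R \oplus \Id_\omega$; clearly $R \oplus \Id_\omega$ has infinitely many equivalence classes, so $[R \oplus \Id_\omega]_{\mathrm{c}} \in \mathbf{S}_{f.g.}^{\infty}$.

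Both well-definedness and order-preservation of $\Phi$ reduce to the equivalence
\[
R \le_{\mathcal{I}} R' \;\Longleftrightarrow\; R \oplus \Id_\omega \le_{\mathrm{c}} R' \oplus \Id_\omega, \qquad \text{for dark } R, R'.
\]
The forward direction uses no darkness and is a routine construction: given a reduction $f : R \to R' \oplus \Id_n$, one can embed the $\Id_n$-summand into $n$ distinct odd positions of $R' \oplus \Id_\omega$, and use fresh odd positions to incorporate the additional $\Id_\omega$-summand of $R \oplus \Id_\omega$, assembling an explicit reduction $R \oplus \Id_\omega \le_{\mathrm{c}} R' \oplus \Id_\omega$.

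The backward direction is the main obstacle, and is precisely where darkness of $R$ enters. Suppose $R \oplus \Id_\omega \le_{\mathrm{c}} R' \oplus \Id_\omega$ via a computable $f$; composing with $x \mapsto 2x$ yields a reduction $h : R \to R' \oplus \Id_\omega$. Let $O := \{x : h(x) \text{ is odd}\}$, a computable set. Since each odd value of $R' \oplus \Id_\omega$ forms a singleton equivalence class, for $x,y \in O$ one has $x \rel{R} y$ iff $h(x) = h(y)$. Were infinitely many $R$-classes to meet $O$, the computable set
\[
T := \{x \in O : (\forall y < x)[\,y \notin O \text{ or } h(y) \neq h(x)\,]\}
\]
would be an infinite computable transversal of $R$, contradicting darkness. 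Hence only finitely many $R$-classes --- say $k$ of them --- send elements into the odd half under $h$, with distinct $h$-images $2z_1+1, \ldots, 2z_k+1$. Hard-coding the finitely many constants $z_1, \ldots, z_k$, define $h'(x) := h(x)$ when $h(x)$ is even and $h'(x) := 2(i-1)+1$ when $h(x) = 2z_i+1$; this is a computable reduction $R \le_{\mathrm{c}} R' \oplus \Id_k$, witnessing $R \le_{\mathcal{I}} R'$.
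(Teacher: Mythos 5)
Your proof is correct, and the embedding itself is exactly the one the paper uses: $R \mapsto R \oplus \Id_\omega$ followed by Theorem~\ref{thm:R+Id realizes f.g. semi}. The difference is in how the key order-theoretic fact is handled. The paper black-boxes it, citing Andrews--Sorbi for the statement that, for dark $R,R'$, one has $R \equiv_{\mathcal{I}} R'$ iff $R\oplus\Id_\omega \equiv_{\mathrm{c}} R'\oplus\Id_\omega$ (and a lemma from \cite{andrews2020theory} for the induced order-embedding into $\mathbf{light}_{/\mathcal{I}}$), whereas you reprove the needed equivalence $R \le_{\mathcal{I}} R' \Leftrightarrow R\oplus\Id_\omega \le_{\mathrm{c}} R'\oplus\Id_\omega$ from scratch. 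Your argument for the nontrivial direction is the right one and is correct: since the odd half of $R'\oplus\Id_\omega$ consists of singleton classes, the preimages in $O$ of distinct odd values form a computable transversal of $R$, so darkness of $R$ forces only finitely many $R$-classes to land in the odd half, and hard-coding those finitely many values yields $R \le_{\mathrm{c}} R'\oplus\Id_k$. (Two cosmetic points: in the degenerate case $k=0$ you should fall back to $R \le_{\mathrm{c}} R' \le_{\mathrm{c}} R'\oplus\Id_1$, since the definition of $\le_{\mathcal{I}}$ requires $n\ge 1$; and in the forward direction you implicitly use that $\Id_n$ is decidable so that the class of an element can be computed --- true for the standard choice of $\Id_n$.) What your route buys is a self-contained proof of the corollary modulo only Theorem~\ref{thm:R+Id realizes f.g. semi}; what the paper's route buys is brevity and, via the cited results, the additional structural information that the image consists of $\mathcal{I}$-degrees of light ceers.
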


\begin{proof}
Let $R$ and $S$ be any pair of dark ceers. Andrews and Sorbi \cite[Theorem
6.2]{joinmeet} proved that $R \! \equiv_{\mathcal{I}} \! S$ if and only if
$R\oplus \Id_\omega \equiv_{\mathrm{c}} S\oplus \Id_\omega$. This implies
that the map
\[
\iota:R \mapsto R\oplus \Id_\omega
\]
induces an embedding of $\mathbf{dark}_{/ \mathcal{I}}$ into
$\mathbf{light}_{/ \mathcal{I}}$, see \cite[Lemma 6.2]{andrews2020theory},
where of course $\mathbf{light}_{/ \mathcal{I}}$ denotes the
$\mathcal{I}$-degrees of light ceers. To conclude, it suffices to note that,
by Theorem~\ref{thm:R+Id realizes f.g. semi}, the image of the embedding
induced by $\iota$ is contained in $\mathbf{S}_{f.g.}^\infty$.
\end{proof}

Finally, we recall that (as proved in \cite{AS-initial}) one can embed the
tree $\left(\omega^{<\omega}, \subseteq \right)$ of finite strings of natural
numbers, partially ordered by the prefix relation on strings, as an initial
segment of $\Inf_{\mathrm{c}}$.

\begin{corollary}\label{cor:realized}
The tree $\left( \omega^{<\omega}, \subseteq \right)$ embeds as an initial
segment of $\Inf_{\mathrm{c}}$ in such a way that the range of the embedding
is included in $\mathbf{S}_{f.g.}^\infty$.
\end{corollary}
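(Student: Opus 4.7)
The strategy is to take the initial segment embedding of $(\omega^{<\omega}, \subseteq)$ into $\Inf_{\mathrm{c}}$ provided by \cite{AS-initial} and then apply Theorem~\ref{thm:R+Id realizes f.g. semi} to every $\mathrm{c}$-degree in its range. First I would invoke the cited result to fix an order embedding $\eta : (\omega^{<\omega}, \subseteq) \hookrightarrow \Inf_{\mathrm{c}}$ whose image is a downward-closed subset of $\Inf_{\mathrm{c}}$, together with representative ceers $R_\sigma \in \eta(\sigma)$ for every $\sigma$.

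The next step is to observe (or arrange) that $\eta(\emptyset) = [\Id_\omega]_{\equiv_{\mathrm{c}}}$. Since $\emptyset$ is the minimum of the tree and the image of $\eta$ is downward closed, $\eta(\emptyset)$ must be a minimal element of $\Inf_{\mathrm{c}}$; and $[\Id_\omega]$ is one such minimal element, because any ceer $R \in \Inf$ with $R \leq_{\mathrm{c}} \Id_\omega$ is decidable and therefore $\equiv_{\mathrm{c}} \Id_\omega$. An inspection of the construction in \cite{AS-initial} should show that the tree can be rooted at $[\Id_\omega]$; should this fail to be automatic, one can adapt the construction to explicitly start from $\Id_\omega$.

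Once the root is $[\Id_\omega]$, for every $\sigma$ we have $\Id_\omega \leq_{\mathrm{c}} R_\sigma$, so $R_\sigma$ is light and therefore $R_\sigma \equiv_{\mathrm{c}} R_\sigma \oplus \Id_\omega$. Theorem~\ref{thm:R+Id realizes f.g. semi} then delivers, for every $\sigma$, a two-generator semigroup $S(R_\sigma)$ with $=_{S(R_\sigma)} \equiv_{\mathrm{c}} R_\sigma$, showing that $\eta(\sigma) \in \mathbf{S}_{f.g.}^\infty$. Hence the whole range of $\eta$ lies in $\mathbf{S}_{f.g.}^\infty$, as required.

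The main obstacle I foresee is the verification that the tree from \cite{AS-initial} can be rooted at $[\Id_\omega]$ rather than at some minimal dark $\mathrm{c}$-degree. This is crucial because Theorem~\ref{thm:R+Id realizes f.g. semi} is applicable only to ceers satisfying $R \equiv_{\mathrm{c}} R \oplus \Id_\omega$, and this is exactly the class of light infinite ceers; if the root were dark, Theorem~\ref{thm:R+Id realizes f.g. semi} would not immediately apply to it. Beyond this point, the proof should be a straightforward combination of the two cited results.
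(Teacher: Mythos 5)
Your overall strategy --- compose the initial-segment embedding from \cite{AS-initial} with Theorem~\ref{thm:R+Id realizes f.g. semi} --- is the same as the paper's, and your observation that the root of the tree must be $[\Id_\omega]_{\equiv_{\mathrm{c}}}$ is correct (indeed, if the root were a minimal dark degree, downward closure of the range would force every degree in the range to be dark, and the theorem could not be applied at all). The genuine gap is the step ``$R_\sigma$ is light and therefore $R_\sigma \equiv_{\mathrm{c}} R_\sigma \oplus \Id_\omega$''. Lightness of $R$ gives $\Id_\omega \leq_{\mathrm{c}} R$ and hence $R \leq_{\mathrm{c}} R \oplus \Id_\omega$, but it does not give the converse reduction $R \oplus \Id_\omega \leq_{\mathrm{c}} R$: such a reduction must send the $\Id_\omega$-summand onto an infinite c.e.\ transversal of $R$ whose $R$-closure is disjoint from the $R$-closure of the image of the $R$-summand, and no general principle guarantees this. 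In fact there exist light \emph{self-full} ceers (see \cite{joinmeet}), i.e.\ light ceers $R$ such that every reduction of $R$ to $R$ meets every $R$-class; for such $R$ one already has $R \oplus \Id_1 \nleq_{\mathrm{c}} R$, a fortiori $R \oplus \Id_\omega \nleq_{\mathrm{c}} R$. And even for non-self-full light ceers, iterating $R \oplus \Id_1 \leq_{\mathrm{c}} R$ only yields $R \oplus \Id_n \leq_{\mathrm{c}} R$ for each finite $n$, not the uniform infinite version. So ``light'' is strictly weaker than the hypothesis $R \equiv_{\mathrm{c}} R \oplus \Id_\omega$ of Theorem~\ref{thm:R+Id realizes f.g. semi}, and your argument does not establish that the degrees $\eta(\sigma)$ satisfy that hypothesis.

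The repair is that you are quoting \cite{AS-initial} for less than it actually provides: \cite[Corollary~3.1]{AS-initial} states not merely that $\omega^{<\omega}$ embeds as an initial segment of $\Inf_{\mathrm{c}}$, but that the range of the embedding can be taken inside the $\mathrm{c}$-degrees of ceers $R$ satisfying $R \equiv_{\mathrm{c}} R \oplus \Id_\omega$. Citing it in that form, Theorem~\ref{thm:R+Id realizes f.g. semi} applies immediately to each $R_\sigma$, and no discussion of the root or of lightness is needed; this is exactly the paper's proof.
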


\begin{proof}
The proof follows by Theorem~\ref{thm:R+Id realizes f.g. semi} and by the
fact that \cite[Corollary~3.1]{AS-initial} shows that the tree
$\omega^{<\omega}$ can be embedded as an initial segment of
$\Inf_{\mathrm{c}}$ in such a way that the range of the embedding is included
in the $\mathrm{c}$-degrees of the ceers $R$ such that $R\equiv_{\mathrm{c}}
R\oplus \Id_\omega$.
\end{proof}

Notice that the embeddings mentioned in the proofs of both
Corollary~\ref{cor:emb1} and Corollary~\ref{cor:realized} take as images
$\mathrm{c}$-degrees of light ceers, in particular of ceers of the form $R
\oplus \Id_\omega$.

\subsection{What about finitely presented semigroups?}
It might be worth noticing that if $R \in \FinCl$ and $R$ is undecidable then
$S(R)$, as described above, is not finitely presentable. Indeed, Litvinceva
\cite{Litvinceva} showed that, if $S$ is a finitely presented semigroup such
that $=_S$ has only finitely many infinite equivalence classes, then $=_S$ is
decidable. Now, if $R \in \FinCl$ and $R$ is undecidable then $=_{S(R)}$ has
only one infinite equivalence class, namely $[C_+]_{=_{S(R)}}$, but on the
other hand $=_{S(R)}$ is undecidable, as $R \leq_{\mathrm{c}} =_{S(R)}$.

If one looks only for finitely generated semigroups $S$ such that $=_S \,
\in\FinCl$, then a slight modification of the proof of Lemma~\ref{lem:R+Id
realizes f.g. semi} shows how to build a finitely generated semigroup $S$,
such that $=_S \, \in \FinCl$ and $R\leq_{\mathrm{c}} =_S$, starting from any
ceer $R \in \FinCl$. For this, it is enough to take $S:=\langle\! X \mid
\{ab^{i+1}a\! =_{S}\! ab^{j+1}a : iRj \} \rangle$. To show that $=_S \, \in
\FinCl$, one easily sees that if $v \in C_+$ and $u_1, \ldots, u_n$ are the
subwords of $v$ of the form $ab^{i_j+1}a$ for some number $i_j$ with $1\leq j
\leq n$, occurring in distinct places of $v$, and $k_{j}$ is the number of
elements in the $R$-class of $i_j$, then the cardinality of the $=_S$-class
of $v$ equals the product $k_1 \cdot k_2\cdot \ldots \cdot k_n$. Once again,
by \cite{Litvinceva} if $R$ is undecidable then no such $S$ is finitely
presentable.

\section{Future research}
The research program of locating which $\mathrm{c}$-degrees of ceers are
$\mathrm{c}$-realized by word problems of suitable algebras is vast and, to
our knowledge, plenty of questions remain, untouched. We conclude by listing
a few research lines that may inspire future work.
\begin{enumerate}
\item Let $\mathbf{S}_{f.p.}$ and $\mathbf{S}_{c.e.}$ denote the
    $\mathrm{c}$-degrees of ceers which are $\mathrm{c}$-realized by the
    word problems of, respectively, finitely presented semigroups and
    c.e.~semigroups. As aforementioned in the introduction,
    $\mathbf{S}_{c.e.}$ coincides with $\mathbf{Ceers}_{\mathrm{c}}$ and,
    by Theorem \ref{thm:main-first} above, $\mathbf{S}_{c.e.}\smallsetminus
    \mathbf{S}_{f.g.}$ contains every hyperdark $\mathrm{c}$-degree. Then,
    it is natural to ask if $\mathbf{S}_{f.g.}\smallsetminus
    \mathbf{S}_{f.p.}$ is also nonempty. If this is the case, one may try
    to compare $\mathbf{S}_{f.g.}$ and $\mathbf{S}_{f.p.}$  with respect to
    the initial segments of $\Inf_{\mathrm{c}}$ that they realize in the
    sense of Corollary~\ref{cor:realized}.

\item More generally, it may be interesting to move the focus from
    semigroups to other algebraic varieties $\mathcal{V}$, and to
    investigate which $\mathrm{c}$-degrees of ceers are
    $\mathrm{c}$-realized by members of $\mathcal{V}$, and in particular by
    finitely presented or finitely generated members of $\mathcal{V}$. The
    case of groups appears to be natural and challenging at the same time.
\item The literature is also rich of papers studying presentations of
    structures but using coceers instead of ceers (a \emph{coceer} is an
    equivalence relation whose complement is c.e.), and investigating which
    structures have a ``word problem'' coinciding with a given coceer: see
    for instance \cite{Khoussainov-Slaman-Semukhin, Kasymov-Morozov,
    Godziszewski}. A more recent variation (\cite{Khoussainov-quest-I,
    H.Trainor-Khoussainov-Turetsky}) studies which random structures have a
    ``word problem'' coinciding with a given ceer or coceer. In all these
    cases, it would be interesting to generalize these approaches to
    $\mathrm{c}$-realizability, aiming at describing those structures
    having word problems which are $\mathrm{c}$-realizable (not just merely
    coinciding) with given ceers or coceers.
\end{enumerate}


\end{document}